\definecolor{forestgreen}{rgb}{0.13, 0.55, 0.13}
\newtheorem{theorem}{Theorem}[section]
\newtheorem{lemma}[theorem]{Lemma}
\newtheorem{proposition}[theorem]{Proposition}
\newtheorem{observation}[theorem]{Observation}
\newtheorem{corollary}[theorem]{Corollary}
\newtheorem*{claim*}{Claim}
\journal{Theoretical Computer Science}
\begin{document}

\title{Hunting a rabbit: complexity, approximability and some characterizations
\footnote{A preliminary version of this paper has appeared in the proceedings of the  31st International Computing and Combinatorics Conference, COCOON 2025, Chengdu, China, August 15-17, 2025, LNCS 15983, Springer 2026, ISBN 978-981-95-0214-1}}

\author[1]{Walid Ben-Ameur}
\ead{walid.benameur@telecom-sudparis.eu}

\author[2,3]{Harmender Gahlawat}
\ead{harmendergahlawat@gmail.com}

\author[1]{Alessandro Maddaloni}
\ead{alessandro.maddaloni@telecom-sudparis.eu}

\address[1]{SAMOVAR, Télécom SudParis, Institut Polytechnique de Paris, 91120 Palaiseau, France}

\address[2]{LIMOS, Universit\'e Clermont Auvergne, CNRS, Clermont Auvergne INP, France}
\address[3]{Ben-Gurion University of the Negev, Beersheba, Israel}

\begin{abstract}
In the \emph{Hunters and Rabbit} game, $k$ hunters attempt to shoot an invisible rabbit on a given graph~$G$. In each round, the hunters select $k$ vertices to shoot at, while the rabbit moves along an edge of~$G$. The hunters win if, at any point, the rabbit is shot. 
The \emph{hunting number} of~$G$, denoted $h(G)$, is the minimum integer~$k$ such that $k$ hunters have a winning strategy, regardless of the rabbit’s moves. \\
The complexity of computing $h(G)$ has been the longest standing open problem concerning the game and has been posed as an explicit open problem by several authors. Our first main contribution resolves this question by proving that computing~$h(G)$ is NP-hard, even for bipartite simple graphs. We further show that the problem remains NP-hard even when $h(G) = O(n^{\varepsilon})$ or when $n - h(G) = O(n^{\varepsilon})$, where~$n$ is the order of~$G$. In addition, we prove that it is NP-hard to approximate~$h(G)$ additively within $O(n^{1-\varepsilon})$. \\
When a time limit~$l$ is imposed on the hunting process, we establish that computing~$h(G)$ remains NP-hard for any $l \ge 2$ bounded by a polynomial in~$n$. On the positive side, we present a polynomial-time $l$-factor approximation algorithm for computing the hunting number with time limit~$l$, and we show that~$h(G)$ can be computed in polynomial time for bipartite graphs when only two time slots are allowed ($l = 2$). \\
Finally, we provide a forbidden-subgraph characterization of graphs with loops satisfying $h(G) = 1$, thereby extending a known characterization for simple graphs.
\end{abstract}
\begin{keyword} {Hunters and rabbit, Complexity, Inapproximability}
\end{keyword}
\maketitle

\section{Introduction}
The hunters and rabbit game has been studied under several different names. Although we hold nothing against rabbits, we choose to use the terminology of hunters and rabbit, as it is the most widely adopted. 
This game is played on an undirected graph $G$ with a positive integer $k$ representing the number of hunters. In each round or time step, the hunters shoot at $k$ vertices, while the rabbit occupies a vertex unknown to the hunters (until the rabbit is possibly shot). The rabbit can start the game in a given subset of vertices (usually all of them) and, if the rabbit is not shot, it must move to an adjacent vertex after each round. The rabbit wins if it can ensure that its position is never shot; otherwise, the hunters win.

As an example consider a complete graph on $n$ vertices: here $n-1$ hunters can shoot at the same $n-1$ vertices for two rounds and be sure the rabbit will be shot. On the other hand, on a path on $n>2$ vertices $v_1,...,v_n$, one hunter can win by
 subsequently shooting at all vertices from $v_2$, to $v_{n-1}$ and then restart shooting backward $v_{n-1},...,v_2$. 
 

The hunters and rabbit game was introduced in \cite{journals/combinatorics/BritnellW13} for the case $k=1$, where the authors show that one hunter wins on a tree if and only if it does not contain a $3$-spider ($H_1$ in Figure \ref{fig:forbid}) as a subgraph. Further, it was also shown that when one hunter can win, he can win in a number of rounds linear in the number of vertices. Very similar results were obtained also in \cite{HASLEGRAVE20141}.
The minimum number of hunters needed to win on a given graph $G$ is called the \textit{hunting number} of $G$, denoted $h(G)$. In \cite{ABRAMOVSKAYA201612} it is proven that the hunting number is upper bounded by the graph pathwidth plus $1$. 
It is also shown that the hunting number of an $(n\times m)$-grid is $\lfloor \frac{\min(n,m)}{2}\rfloor+1$ and that the hunting number of trees is $O(\log (n))$. On the other hand, there are trees for which the hunting number is $\Omega(\log (n))$ \cite{gruslys2015catching}. In \cite{BOLKEMA2019360} the hunting number of the $n$ dimensional hypercube is proven to be $1+\sum_{i=1}^{n-2}\binom{i}{\lfloor \frac{i}{2} \rfloor}$. Here the authors also show that {the graph degeneracy given by} $\max_{S \subseteq V(G)} \delta(D[S])$ {(i.e., the maximum through induced subgraphs of their minimum degree)} is a lower bound for $h(G)$.  
In \cite{dissaux:hal-03995642} the authors provide a polynomial-time algorithm to determine the hunting number of split graphs. They also show that computing the hunting number on any graph is FPT parameterized by the size of a vertex cover. Moreover they prove that, if a monotone capture is required, the number of hunters must be at least the pathwidth of the graph and it is not possible to additively approximate the monotone hunting number within $O(n^{1-\epsilon})$. 

Another variant  of hunters and rabbit is considered in \cite{althoetmar2025,Bern22}  where a loop is  added around each vertex (so the rabbit is not required to move at each time step). This was called the firefighting game. Notice that an NP-hardness proof of this variant is independently proposed in the recent paper \cite{althoetmar2025}.

{A more general version of the hunters and rabbit problem  was also considered in \cite{ourpaper,benameur2024complexityresultscopsrobber} where the rabbit moves along the edges of a directed graph $D$ that might also contain loops.  
It is shown there that it is NP-hard to decide whether the hunting number of a digraph is $1$. Computing the hunting number is  proved to be FPT parametrized in some generalization of the vertex cover. When the digraph is a tournament, tractability is achieved with respect to the minimum size of a feedback vertex set. The hunting number is also proved to be less than or equal to $1$ + the directed pathwidth. An easy to compute  lower bound  is given by $\max_{S \subseteq V} \max(\delta^+(D[S],\delta^-(D[S])$ (i.e., the maximum through all induced subgraphs of the minimum indegree and the minimum outdegree). When a monotone capture is assumed,  the hunting number is proved to be greater than or equal to the directed pathwidth, while pathwidth plus $1$ is still a valid upper bound. Another result worth mentioning  from \cite{ourpaper} is related to the minimum number of shots (regardless of  the number of hunters) required to shoot the rabbit. It is proved that this number is easy to compute, and that the rabbit can always be shot before time step $n$ using this minimum number of shots. Some connections with the no-meet matroids of \cite{BENAMEUR2022}, as well as with the matrix mortality problem are also drawn in  \cite{benameur2024complexityresultscopsrobber}. }

The hunters and rabbit game falls within the broader category of cops and robber games, where different versions are defined based on factors such as the available moves for the cops, the robber's speed, and the robber's visibility to the cops. These kind of games are widely studied, for a review see the book \cite{Bontato}. The first cops and robber game was defined in \cite{Quillot} and \cite{NOWAKOWSKI1983235}. The difference between the hunters and rabbit and this game is that the cops must follow the edges of the graph and can always see the robber. Deciding whether $k$ cops can catch a robber in this version is EXPTIME-complete when $k$ is part of the input \cite{KINNERSLEY2015201}, but it is polynomial when $k$ is fixed.
 Cops and robber games variants provide algorithmic interpretations of several graph (width) measures like treewidth \cite{SEYMOUR199322}, pathwidth \cite{Parsons1978PursuitevasionIA}, directed pathwidth \cite{Barat}, directed tree-width \cite{seymourdirtw}, DAG-width \cite{10.5555, bang2016dag}. These games have been intensively studied also due to their applications in numerous fields such as multi-agent systems \cite{AlejandroIsaza}, robotics \cite{chungrobots}, database theory \cite{GOTTLOB2003775}, distributed computing \cite{Nisse2019}.

\smallskip
\noindent\textbf{Contributions and paper organization.} \\
In this paper we deal with the hunters and rabbit game on undirected graphs and we also consider the case when those graphs contain loops. Our main result states that computing $h(G)$ is NP-hard. This confirms the sentiment emerging from the literature (e.g. in \cite{ABRAMOVSKAYA201612,dissaux:hal-03995642}). {In particular, we first provide a reduction from \textsc{3-partition} to {the problem of computing the minimum number of hunters, $h_S(G)$, required when the rabbit starts in a subset $S$}. We then provide a polynomial time reduction from computing $h_S(G)$ to computing $h(G)$ for graphs that may contain loops. We conclude by providing a reduction from $h(G)$ on graphs with loops to bipartite graphs.} We also prove that the problem remains hard even if $h(G)$ or $n-h(G)$ is as small as $O(n^{\epsilon})$, for any $\epsilon>0$. Approximating the hunting number of a graph within an additive error of $O(n^{1-\epsilon})$ is shown to be NP-hard too, for every $\epsilon >0$. 
We also consider the variant where rabbit capture is required no later than some time $l$. We show that computing the minimum number of hunters (denoted by $h(G,l)$)  is NP-hard for every $l\ge 2$ that is upper bounded by a polynomial in $n$ and we give an $l$-factor approximation algorithm. Furthermore, we show that $h(G,2)$ is equal to the matching number of $G$ when the graph is bipartite. 
Finally, we extend to graphs that can contain loops, the characterization from \cite{journals/combinatorics/BritnellW13,HASLEGRAVE20141}
of graphs $G$ such that $h(G)=1$.

The paper is organized as follows. Notation and definitions are provided  in Section 2.  Some preliminary properties are presented in Section 3.  Section 4 contains the NP-hardness proof of the hunting number, in addition to  some inapproximability results.
Then the time limit variant is studied in Section 5. Section 6 is devoted to the characterization  of graphs (with loops) where one hunter wins.  Finally, the paper concludes with a few remarks in Section 7.

\section{Notation and some terminology}
\label{sec:nota}
Let us start with some notation. $G=(V,E)$ is an undirected graph where $V(G):= V$ (resp. $E(G):=E$) denotes the set of vertices (resp. edges) of $G$. 
{Unless specified otherwise, we assume throughout this paper that the graphs under consideration contain neither isolated vertices nor parallel edges.}
One can then use $uv$ to denote an edge whose endpoints are $u$ and $v$. When it does not contain loops, the graph is said to be simple.
The number of vertices is generally denoted by $n(G):=|V|$, or simply $n$  when clear from the context.
Let $K_n$ be the complete graph on $n$ vertices without loops, while ${K}^{\circ}_n$ denotes the complete graph with loops (so the number of edges is $n(n+1)/2$).
Given $A \subset V$, $G[A]$ is the subgraph of $G$ induced by $A$.  A stable set (resp. clique) $S \subset V$ is a subset of vertices such that $G[S]$ does not contain edges (resp. is a complete graph). 
Two disjoint subsets of vertices $A$ and $B$ are said to be \emph{fully connected} if each vertex of $A$ is adjacent to each vertex in $B$. For a vertex $v$, let $N(u):= \{w~|~uw \in E(G)\}$ and $N[u]:= N(u) \cup \{u\}$.
Two vertices $u$ and $v$ are said to be \textit{twins} if $N[u] = N[v]$.
Given a subset of vertices $S$, $N(S)$ denotes the set of vertices having at least one neighbor in $S$. 
We will use $[n]$ to denote the set of numbers $\{1,...,n \}$.

{The hunters and rabbit game is played on a graph $G=(V,E)$. We identify with $W_t \subseteq V$ the positions at which hunters are 
\emph{shooting} at time $t$,
 while  $R_t$ denotes the \emph{rabbit territory} (i.e., the set of possible positions of the {invisible} rabbit assuming that he was not yet shot).  We have that $R_1=V\setminus W_1$ and, for $t\ge 2$, $R_t = N(R_{t-1}) \setminus W_t$. A \emph{hunter strategy} {$(W_t)_{t \geq 1}$} 
 is a  \emph{winning strategy}, if and only if, {there exists a finite $T$ such that}  $R_T = \emptyset$. {With a slight abuse of notation, such a winning strategy could be simply identified with the first $T$ sets $W_1,..., W_T$.}
 Observe that $(W_t)_{t \geq 1}$ 
 is not a winning strategy, if and only if, there exists an \emph{escape walk} of the rabbit (i.e., $(v_t)_{t \geq 1}$ such that $v_t \notin W_t$, and $v_t v_{t+1} \in E(G)$, $\forall t \ge 1$).
If $R_t \cap A =\emptyset$, we say that $A$ is \emph{decontaminated} at time $t$. When {$k=\max_{t \geq 1} |W_t|$, we say that the strategy $(W_t)_{t \geq 1}$} uses $k$ hunters. The minimum integer $k$ such that there exist an integer $T$ and a  winning  strategy $W_1,...,W_T$ {such that $k=\max_{t \in [T]} |W_t|$} is called the \emph{hunting number} of $G$ and denoted by $h(G)$.  When the set of possible initial positions of the rabbit is restricted to some subset $S$, the hunting number is noted $h_S(G)$ (so $ h(G) = h_V(G)$).  Note that $R_1 = S \setminus W_1$. If $h(G) \leq k$, $G$ is said to be a \emph{$k$-hunterwin} graph. }

\section{Preliminary results}
\label{sec:prel}
Let $k$ represent the number of hunters ($|W_t| = k$, $\forall t \geq 1$) to be dealt with. From the definition of $h_S(G)$, we can ensure that if $k<h_S(G)$, then there are no  winning strategies using $k$ hunters if the rabbit starts at $S$. In other words, $R_t \neq \emptyset$ for any time $t$.  
The first lemma provides a reciprocal view that is mainly due to the undirected nature of the graph: using $k$ hunters with $k <h_S(G)$ and assuming that the rabbit can start anywhere (not only in $S$), then the rabbit territory will always intersect with $S$ (i.e., $S$ cannot be decontaminated). {Lemma \ref{nodeconaminationS} will be crucial to lift our hardness result  for computing $h_S(G)$ to hardness of computing $h(G)$ in general.}

\begin{lemma}\label{nodeconaminationS}
If $h_S(G) > k$, then any strategy using $k$ 
{hunters} in $G$ is such that $R_{\tau} \cap S \neq \emptyset, \forall \tau \geq 1$.
\end{lemma}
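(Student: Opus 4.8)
The plan is to argue by contrapositive: I would show that if some $k$-hunter strategy $(W_t)_{t\ge 1}$ played with the rabbit allowed to start anywhere (so that $R_1 = V \setminus W_1$) ever decontaminates $S$, say $R_\tau \cap S = \emptyset$ for some finite $\tau$, then one can manufacture from it a winning $k$-hunter strategy against a rabbit confined to start in $S$, forcing $h_S(G) \le k$. Since the only structural feature I intend to exploit is that the edges are undirected, the engine of the argument is the time-reversibility of escape walks.

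First I would record the walk characterization of the rabbit territory. Unfolding the recurrence $R_t = N(R_{t-1}) \setminus W_t$, a vertex $v$ lies in $R_\tau$ precisely when there is a walk $v_1 v_2 \cdots v_\tau = v$ in $G$ with $v_j \notin W_j$ for every $j \in [\tau]$ and $v_j v_{j+1} \in E(G)$ for every $j \in [\tau-1]$; the only constraint on the starting vertex is $v_1 \notin W_1$, which is exactly $R_1 = V \setminus W_1$. Hence the hypothesis $R_\tau \cap S = \emptyset$ says precisely that no shot-avoiding walk on $\tau$ vertices ends in $S$.

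Then I would reverse the shooting schedule. Define $W'_j := W_{\tau - j + 1}$ for $j \in [\tau]$ and play it against a rabbit starting in $S$, producing territories $R'_1 = S \setminus W'_1$ and $R'_j = N(R'_{j-1}) \setminus W'_j$. The crux is to check that any shot-avoiding walk $u_1 \cdots u_\tau$ witnessing $u_\tau \in R'_\tau$ (so $u_1 \in S$ and $u_j \notin W_{\tau-j+1}$ for all $j$) maps, under the reindexing $w_i := u_{\tau - i + 1}$, to a walk $w_1 \cdots w_\tau$ with $w_i \notin W_i$ for every $i$ and $w_\tau = u_1 \in S$. It is here, and only here, that undirectedness is used: it guarantees $w_i w_{i+1} = u_{\tau-i+1} u_{\tau-i} \in E(G)$. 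But such a $w$-walk witnesses $w_\tau \in R_\tau \cap S$, contradicting the hypothesis; therefore $R'_\tau = \emptyset$, so the reversed schedule is a winning strategy that captures the $S$-confined rabbit within $\tau$ rounds using $k$ hunters, and we conclude $h_S(G) \le k$.

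The step I expect to demand the most care is the bookkeeping of the reversal, namely reconciling the asymmetry between $R_1 = V \setminus W_1$ (start anywhere) in the original play and $R'_1 = S \setminus W'_1$ (start in $S$) in the reversed play, so that the role of $S$ correctly migrates from the terminal vertex of the original walk to the initial vertex of the reversed one. The remaining content is a routine induction unfolding the territory recurrence, and the appeal to undirectedness is confined to the single observation that reversing a walk preserves adjacency.
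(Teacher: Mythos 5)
Your proof is correct and uses the same key idea as the paper's: reverse the shooting schedule $W_\tau,\dots,W_1$ and reverse escape walks, exploiting only the undirectedness of $G$. The paper argues directly (from $h_S(G)>k$ it extracts a surviving walk against the reversed schedule starting in $S$, then reverses it to land in $R_\tau\cap S$), whereas you phrase the identical argument contrapositively; the bookkeeping you flag is handled correctly.
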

\begin{proof}
Let  $(W_t)_{t \geq 1}$ be any strategy in $G$ using $k$ {hunters}. 
Let $\tau \geq 1$ be an integer and assume hunters shoot at $W_{\tau},...,W_1$ in this order: since $h_S(G) > k$, there must exist a rabbit walk $v_1,...,v_{\tau}$ with $v_1\in S$ that survives against $W_{\tau},...,W_1$ (i.e. $v_t\notin W_{\tau+1-t}$ for $t \in [\tau]$). But then $v_{\tau},...,v_1$ is a rabbit walk to $S$ that survives against $W_1,...,W_{\tau}$, namely  $v_{\tau+1-t}\notin W_t$ for $t \in [\tau]$ and $R_{\tau} \cap S \neq \emptyset$.
    \end{proof}

Let $G = (V,E)$ be a graph whose edge set might contain loops in addition to regular edges.   Let $B_G$ be the undirected  bipartite graph built from $G$ as follows: $V(B_G) = V \cup V'$ where $V'$ is a copy of $V$ and $E(B_G)$ contains edges $v'w$ and $w'v$ for any edge  $v w \in E$. A loop  $v v$ of  $E$ is then represented by one edge $v'v$ in $B_G$. $B_G$ can also be seen as the tensor product of $G$ with $K_2$. 
Observe that $B_G$ does not contain loops (see Figure \ref{fig:BC} for illustration). Next lemma states that $G$ and $B_G$ have the same hunting number. 

\begin{lemma}
$h(G) = h(B_G)$.
\label{lem:bipartite}
\end{lemma}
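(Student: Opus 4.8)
The plan is to exhibit a correspondence between winning hunter strategies in $G$ and in $B_G$, in both directions, so that the minimum number of hunters is preserved. The key structural fact is that $B_G$ is the tensor product $G \times K_2$: a walk in $B_G$ alternates between the two copies $V$ and $V'$, and crucially an edge (including a loop $vv$ in $G$) is traversed in one step of $B_G$ as a move between $V$ and $V'$. This means a walk $v_1, v_2, v_3, \dots$ in $G$ (respecting loops, i.e.\ the rabbit may stay put only across a loop) lifts to a walk in $B_G$ whose vertices alternate parity, and conversely any walk in $B_G$ projects to a walk in $G$.

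First I would prove $h(B_G) \le h(G)$. Given a winning strategy $(W_t)_{t\ge 1}$ in $G$ using $h(G)$ hunters, I would define a strategy in $B_G$ that shoots, at time $t$, at the copies of $W_t$ lying in the parity class reachable by the rabbit at that time; concretely one shoots $W_t$ in $V$ on even steps and $W'_t$ in $V'$ on odd steps (or some fixed assignment). Because a rabbit in $B_G$ occupies $V$ and $V'$ on alternating rounds, shooting the appropriately-parity-copied set at each round forces the $B_G$-rabbit territory to project exactly onto the $G$-rabbit territory $R_t$; once $R_T=\emptyset$ in $G$, the corresponding $B_G$ territory is empty as well. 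The number of hunters used is unchanged since $|W'_t| = |W_t|$.

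For the reverse inequality $h(G) \le h(B_G)$, I would take a winning strategy $(U_t)_{t\ge 1}$ in $B_G$ and fold it down to $G$. Here the subtlety is that $U_t \subseteq V\cup V'$ may contain copies of a vertex $v$ from both sides; projecting to $G$ by shooting $\pi(U_t) = \{v : v\in U_t \text{ or } v'\in U_t\}$ could in principle increase the hunter count. The clean way around this is to use the parity structure: since the $B_G$-rabbit alternates sides, at any given round it can only be in one parity class, so I may restrict $U_t$ to the relevant class without weakening the strategy against any escape walk, and then project that single class down to $G$, which is injective on a single side and hence preserves cardinality. Any escape walk in $G$ would lift to an escape walk in $B_G$ of the chosen parity, contradicting that $(U_t)$ was winning; thus the projected strategy is winning in $G$ with at most $h(B_G)$ hunters.

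The main obstacle I expect is handling the parity bookkeeping rigorously, in particular justifying that one may assume without loss of generality that each $U_t$ lives in a single parity class and that loops of $G$ behave correctly under the lift (a loop becomes an ordinary crossing edge in $B_G$, so a rabbit ``resting'' in $G$ corresponds to a genuine move in $B_G$, and this must be tracked so that the escape-walk correspondence is exact). I would make this precise by showing that an escape walk $(v_t)_{t\ge1}$ in $G$ survives $(W_t)$ if and only if the lifted walk $(x_t)_{t\ge1}$ in $B_G$, where $x_t$ is the copy of $v_t$ in the parity class $t \bmod 2$, survives the corresponding strategy, and then reading the two inequalities off this equivalence; once the escape-walk bijection respecting parity is established, both cardinality-preservation claims follow immediately.
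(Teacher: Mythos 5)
Your overall approach (lift and project along the tensor-product structure of $B_G = G\times K_2$) is the same as the paper's, but your proof of $h(B_G)\le h(G)$ has a genuine gap. You propose to shoot, at time $t$, the copy of $W_t$ lying in ``the parity class reachable by the rabbit at that time'' under a single fixed assignment. But the rabbit in $B_G$ may start in either $V$ or $V'$, and the hunters do not know which; a rabbit that starts in the class opposite to your fixed assignment is \emph{never} shot at, since at every round all your shots land in the class it does not occupy. Concretely, with shots in $V$ at odd rounds and in $V'$ at even rounds, any walk $v_1', v_2, v_3', \dots$ trivially survives. So the claim that ``once $R_T=\emptyset$ in $G$, the corresponding $B_G$ territory is empty as well'' only holds for half of the $B_G$ territory. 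The standard fix --- and what the paper does --- is to play the strategy twice, for $2T$ rounds, with the parity assignment flipped in the second half, so that rabbits starting in $V$ are caught during rounds $1,\dots,T$ and rabbits starting in $V'$ during rounds $T+1,\dots,2T$. Your argument does not produce a winning strategy without this doubling.

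In the reverse direction your worry is misplaced: the projection $\pi(U_t)=\{v : v\in U_t \text{ or } v'\in U_t\}$ is the image of $U_t$ under a two-to-one map, so $|\pi(U_t)|\le |U_t|$ always; the hunter count cannot increase, and no restriction of $U_t$ to a single parity class is needed. The paper simply shoots $\pi(U_t)$ and observes that an escape walk $(v_t)$ in $G$ against $(\pi(U_t))$ lifts to the alternating walk $v_1, v_2', v_3,\dots$ in $B_G$, which survives $(U_t)$ because $v_t\notin\pi(U_t)$ forces both $v_t\notin U_t$ and $v_t'\notin U_t$. Your restricted-class variant can be made to work via the same lift, but it adds a complication that the problem does not have.
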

\begin{proof}
Let $W_1, W_2,..., W_T$ be a winning strategy in  $G$ using $h(G)$ hunters. A winning strategy $W^B$ of length $2T$ is built in $B_G$ as follows: for $1 \leq t \leq T$, let $W^B_t = \{v: v \in W_t \}$ if $t$ is odd and  $W^B_t = \{v': v \in W_t\}$ for even time $t$. For $T+1 \leq t \leq 2T$, we take $W^B_t = \{v: v \in W_{t-T}\}$ if $t$ is even and  $W^B_t = \{v': v \in W_{t-T}\}$ otherwise. If the rabbit was initially in $V$, then he will be shot during the first $T$ iterations, otherwise  this occurs between $T+1$ and $2T$. We consequently have $h(B_G) \leq h(G)$.\\
Conversely, if $(W^B_t)_{t \geq 1} $ is a winning strategy in $B_G$, then by simple projection on $V$ we get a winning strategy in $G$: let $W_t = \{v: v \in W^B_t\} \cup \{v: v' \in W^B_t\}$. This implies that $h(B_G) \leq h(G)$.  \end{proof}

\begin{figure}[htbp]
\centering
\vspace{-15mm}
\includegraphics
[scale=0.35]{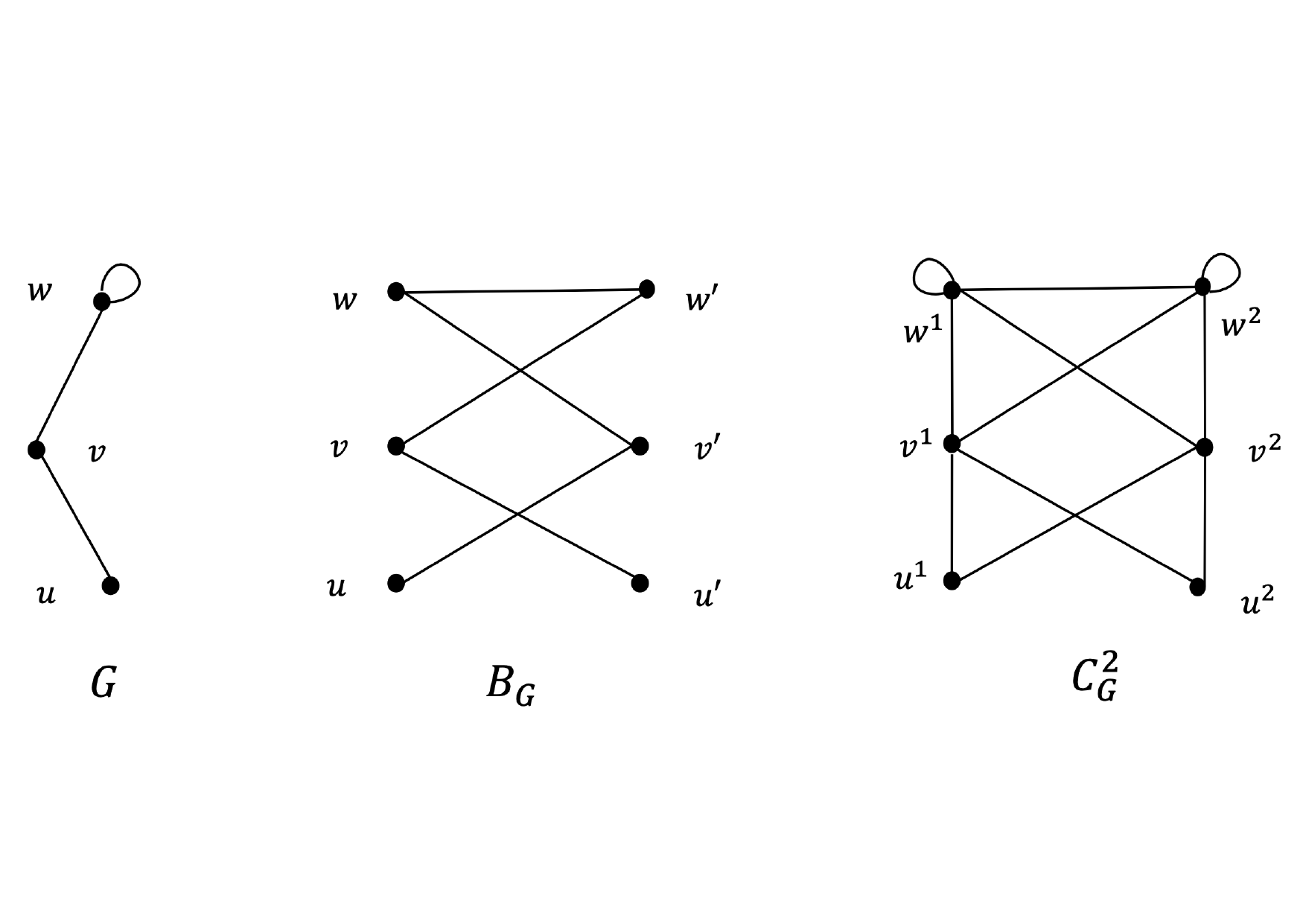} \vspace{-15mm}
    \caption{Illustration of $B_G$ and $C_G^p$ (with $p = 2$)}
\label{fig:BC}
\end{figure}

For some number $p \geq 2$ and some graph $G=(V,E)$, let us build the graph $C_G^p$ as follows:  $p$ copies   $G^i = (V^i,E^i)_{1 \leq i \leq p}$ of  $G$ are considered; if some edge $vw \in E$ then $C_G^p$ contains also edges $v^i w^j$ for $1\leq i, j \leq p$ (see Figure \ref{fig:BC} for illustration).  
The graph $C_G^p$ can be seen as the tensor product of $G$ with ${K}^{\circ}_p$. We show that $h(C_G^p)$ is simply $p \times h(G)$.

\begin{lemma}
$h(C_G^p) = p \times h(G)$.
\label{lem:C}
\end{lemma}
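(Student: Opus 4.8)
The plan is to prove the two inequalities $h(C_G^p) \le p\,h(G)$ and $h(C_G^p) \ge p\,h(G)$ separately, both relying on a single structural observation: in $C_G^p$ the $p$ copies $v^1,\dots,v^p$ of any vertex $v$ share a common neighbourhood, namely $N(v^i) = \{w^j : w \in N(v),\ 1 \le j \le p\}$, which depends only on $v$ and not on the index~$i$. Consequently, the neighbourhood of any set of the form ``all copies of $A \subseteq V$'' is again ``all copies of $N(A)$'', so the rabbit dynamics in $C_G^p$ projects cleanly onto the dynamics in $G$.

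For the upper bound I would take a winning strategy $W_1,\dots,W_T$ in $G$ using $h(G)$ hunters and lift it to $C_G^p$ by shooting, at each time $t$, every copy of every vertex of $W_t$, that is $\widehat{W}_t = \{v^i : v \in W_t,\ 1 \le i \le p\}$, which uses $p|W_t| \le p\,h(G)$ hunters. A straightforward induction on $t$ using the structural observation shows that the resulting rabbit territory in $C_G^p$ is exactly the set of all copies of $R_t$ (the $G$-territory): the base case is $R_1 = V \setminus W_1$, and the inductive step combines $N(\text{all copies of } R_{t-1}) = \text{all copies of } N(R_{t-1})$ with the fact that $\widehat{W}_t$ removes precisely all copies of $W_t$. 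Since $R_T = \emptyset$ in $G$, the lifted territory empties at time $T$, giving a winning strategy and hence $h(C_G^p) \le p\,h(G)$.

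For the lower bound I would go the other way: start from any winning strategy $(W^C_t)$ in $C_G^p$ using $k := h(C_G^p)$ hunters and project it to $G$ by the rule \emph{shoot $v$ at time $t$ only when all $p$ copies of $v$ are shot}, i.e.\ $W_t = \{v : v^i \in W^C_t \text{ for all } i\}$. The crucial invariant, proved by induction on $t$, is that every vertex surviving in $G$ retains at least one surviving copy in $C_G^p$: if $v \in R_t$ then $v^i \in R^C_t$ for some $i$. The inductive step uses two facts: a predecessor $u \in R_{t-1}$ of $v$ has a surviving copy $u^j \in R^C_{t-1}$, which by the neighbourhood identity places all copies $v^1,\dots,v^p$ in $N(R^C_{t-1})$; and $v \notin W_t$ means not all copies of $v$ are shot, so at least one copy escapes into $R^C_t$. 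Contrapositively, $R^C_t = \emptyset$ forces $R_t = \emptyset$, so the projection is winning in $G$. Finally the counting bound $p|W_t| \le \sum_{v} |\{i : v^i \in W^C_t\}| = |W^C_t| \le k$ yields $|W_t| \le k/p$, whence $h(G) \le k/p$ and therefore $h(C_G^p) \ge p\,h(G)$.

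I expect the lower-bound direction to be the main obstacle, and within it the choice of the ``all copies shot'' projection rule is what makes everything work: a weaker rule (e.g.\ shoot $v$ as soon as some copy is shot) would not admit the counting bound, while the threshold $p$ is exactly what guarantees one surviving copy persists whenever $v$ survives in $G$. The loops of $K^{\circ}_p$ (and any loops of $G$) are harmless throughout, since the identity $N(v^i) = \{w^j : w \in N(v)\}$ holds verbatim, a loop at $v$ simply contributing $v$ to $N(v)$.
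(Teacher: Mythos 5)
Your proof is correct, and while the upper bound coincides with the paper's (lift $W_t$ to all $p$ copies), your lower bound takes a genuinely different route. The paper argues by contradiction with a pigeonhole on the hunters: if fewer than $p\,h(G)$ hunters are used, then at each time $t$ some copy index $i(t)$ receives fewer than $h(G)$ shots; projecting the strategy onto that (time-varying) slice yields a sub-$h(G)$ strategy in $G$, and the resulting rabbit escape walk $(v_t)$ is lifted back to $C_G^p$ by hopping between copies via $u_t = v_t^{i(t)}$, which is a legal walk precisely because every copy of $v_t$ is adjacent to every copy of $v_{t+1}$. You instead project the \emph{hunter} strategy directly with the ``all copies shot'' threshold rule, prove the invariant that every surviving $v\in R_t$ keeps a surviving copy in $R^C_t$, and extract the factor $p$ from the counting bound $p\lvert W_t\rvert\le\lvert W^C_t\rvert$; your argument is direct rather than by contradiction and never needs to construct a rabbit walk. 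Both are valid; your projection is arguably cleaner and self-contained for this lemma, whereas the paper's pigeonhole-plus-escape-walk technique is chosen because it is reused verbatim to prove the lower bound of Lemma~\ref{lem:comp} on the join $G\nabla H$, where no ``all copies'' structure is available. Your identification of why the threshold-$p$ rule (rather than ``some copy shot'') is needed for the counting bound is exactly the right observation.
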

\begin{proof}
Consider a winning strategy $(W_t)_{t \geq 1}$ in $G$ using $h(G)$ hunters. We build a  strategy $(W^C_t)_{t \ge 1}$  in $C_G^p$ using $p \times h(G)$ hunters as follows: $W^C_t = \bigcup_{1 \leq i \leq p} \{v^i: v \in  W_t\}$. This strategy is obviously a winning one showing that $h(C_G^p) \leq p \times h(G) $.
Assume that $h(C_G^p) <p \times h(G)$. Consider a winning strategy $(W^C_t)_{t\ge 1}$ using $h(C_G^p)$ hunters. Observe that at each time step $t$, there exists at least one index $i$ (denoted by $i(t)$) such that  $|W^C_t \cap V^i| < h(G)$. Consider the strategy $(W_t)_{t\ge 1}$ (in $G$) defined by $W_t = \{v: v^{i(t)} \in W^C_t\}$. 
Observe that $|W_t|<h(G)$ implying the existence of a rabbit {escape walk} $(v_t)_{t \geq 1}$ allowing him to survive the hunter strategy. The rabbit escape {walk} in $G$ can be transformed into an escape {walk} $(u_t)_{t \geq 1}$ in $C_G^p$ where  $u_t = v^{i(t)}_t$.  This leads to contradiction since $(W^C_t)_{t\ge 1}$ was assumed to be a winning strategy. Hence, $h(C_G^p)=p \times h(G)$. 
    \end{proof}

Given two graphs $G=(V(G),E(G))$ and $H = (V(H),E(H))$, let  
$G \nabla H$ be  {the join} graph obtained by considering the union of $G$ and $H$ and fully connecting   $V(G)$ and $V(H)$: $V(G \nabla H) = V(H) \cup V(G)$ and $E (G \nabla H) = E(G) \cup E(H) \cup \{ uv: u \in V(G), v \in V(H)  \}$.  The next lemma states that $h(\cdot)$ is superadditive with respect to $\nabla$ and provides  an obvious upper bound. 

\begin{lemma}
$h(G) + h(H)  \leq h(G \nabla H) \leq \min \left(h(G) + n(H), h(H) + n(G) \right)$.
\label{lem:comp}
\end{lemma}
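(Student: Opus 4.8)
The plan is to prove the two inequalities separately, treating the upper bound as a short constructive argument and devoting the bulk of the effort to the superadditive lower bound.

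For the upper bound I would establish $h(G\nabla H)\le h(H)+n(G)$; the twin bound $h(G\nabla H)\le h(G)+n(H)$ then follows by symmetry, and taking the smaller of the two gives the claim. Fix an optimal winning strategy $W^H_1,\dots,W^H_T$ on $H$ using $h(H)$ hunters, and in $G\nabla H$ shoot $W_t=V(G)\cup W^H_t$ at each time $t\le T$, which uses $n(G)+h(H)$ hunters. Since all of $V(G)$ is shot at every step, a surviving rabbit must occupy $V(H)$ at every time; as both endpoints of each of its moves then lie in $V(H)$, every such move is along an edge of $H$. Hence a hypothetical survivor would yield an escape walk of the $H$-game against $(W^H_t)$, contradicting that this strategy clears $H$, so the rabbit is caught by time $T$.

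For the lower bound I would prove the contrapositive: any strategy $(W_t)$ using $k\le h(G)+h(H)-1$ hunters admits a rabbit escape walk and so cannot be winning. Writing $W^G_t=W_t\cap V(G)$ and $W^H_t=W_t\cap V(H)$, the budget constraint forces that at each time $t$ at least one side is \emph{under-resourced}, namely $|W^G_t|<h(G)$ or $|W^H_t|<h(H)$ (otherwise $|W_t|\ge h(G)+h(H)>k$). I would fix for each $t$ such a side $s_t\in\{G,H\}$ and cut the timeline into maximal runs on which $s_t$ is constant. The key sub-claim is that on a run $[a,b]$ with, say, $s_t=G$ throughout, there is a walk $v_a,\dots,v_b$ in $G$ with $v_t\notin W^G_t$ and consecutive vertices adjacent in $G$: otherwise the $G$-territory started from $V(G)\setminus W^G_a$ would be emptied by time $b$, making $W^G_a,\dots,W^G_b$ a winning $G$-strategy (rabbit starting anywhere) with fewer than $h(G)$ hunters, contradicting the minimality of $h(G)$ (this is precisely the phenomenon behind Lemma~\ref{nodeconaminationS} applied with $S=V$). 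I would then stitch the per-run walks together: at the boundary between a run on one side and the next run on the other, the two consecutive rabbit positions lie on opposite sides and are therefore adjacent in $G\nabla H$ by full connectivity, so the rabbit may hop freely, and since the side it enters is under-resourced it shoots fewer than $h(\cdot)\le n(\cdot)$ of that side's vertices, leaving a free landing vertex, which I choose to be the first vertex of that run's escape walk.

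Concatenating the runs (treating a final infinite run, if any, as an infinite escape walk produced by the same minimality argument) yields an infinite walk $(v_t)$ with $v_t\notin W_t$ and $v_tv_{t+1}\in E(G\nabla H)$ for all $t$, i.e.\ a rabbit escape walk, so $(W_t)$ is not winning and $h(G\nabla H)\ge h(G)+h(H)$. I expect the main obstacle to be the lower bound, and within it the bookkeeping that lets the rabbit be \emph{routed} to the correct vertex as it crosses between sides. Two facts make this work and deserve careful statement: full connectivity makes every cross-side transition automatically an edge (so switching sides is always legal and instantaneous), and an under-resourced side always both admits the required finite escape walk and offers a free vertex on which to land.
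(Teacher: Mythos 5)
Your proof is correct and takes essentially the same approach as the paper: the upper bound via the strategy $V(G)\cup W^H_t$ is identical, and your lower bound is exactly the "under-resourced side" argument the paper invokes by reference to the technique of Lemma~\ref{lem:C}. Your run-decomposition and boundary-stitching merely spell out in full the details that the paper leaves implicit in that one-line reference, and they are sound.
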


\begin{proof}
Consider a winning strategy $(W_t)_{t \geq 1}$ in $G$ using $h(G)$ hunters. Then $(V(H) \cup W_t)_{t \ge 1}$ is obviously a winning strategy in  $G \nabla H$ showing that  $h(G) + n(H)$ is an upper bound for $h(G \nabla H)$. By symmetry, $h(H) + n(G)$ is also an upper bound.  The lower bound is proved using exactly the same technique already used in the proof of Lemma \ref{lem:C}. More precisely, given any strategy
$(W_t)_{t \geq 1}$ using strictly less than $h(G)+h(H)$ hunters, we have either $|W_t \cap V(G)| <h(G)$ or  $|W_t \cap V(H)| <h(H)$ allowing to build a rabbit escape strategy.  
    \end{proof}

Observe that Lemma \ref{lem:comp} implies that $h(G \nabla {K}^{\circ}_k) = h(G) + k$ while two inequalities can be obtained if $H = K_k$:   
$h(G) + k -1 \leq h(G \nabla {K}_k) \leq h(G) + k$.
{Note that $h(K_k \nabla {K}_k)=h(K_{2k})=2k-1=h(K_k)+n(K_k)>h(K_k)+h(K_k)$ showing that the upper bound is sharp. 
On the other hand, let $G(a,p)$ be a graph obtained as the union of  {${K}^{\circ}_{a+1}$ and a path $v_1...v_p$  on $p \geq 2$ vertices starting at the clique (so $v_1 \in {K}^{\circ}_{a+1}$). Observe that $n(G(a,p)) = a+p$ and $h(G(a,p)) = a+1$.}   Now consider the join of two distinct copies of $G(a,p)$: we have $h(G(a,p) \nabla G(a,p))\le \max(2a+2,a+p+2)$. Indeed, two hunters can decontaminate the path on the first copy, while $a+p$ hunters {are shooting at} 
the second copy, then, for one step, $2(a+1)$ hunters can cover the clique
on both copies.
At this point the 
{rabbit's} territory is reduced to $\{v_2,...,v_p\}$ on the second copy. From there $a+p$ hunters {shoot at} 
the  first copy, while two hunters decontaminate the path in the second copy. When {$2 \leq p\le a$, we have $ h(G(a,p) \nabla G(a,p))=2a+2=h(G(a,p))+h(G(a,p))<h(G(a,p))+n(G(a,p))$, implying that the lower bound is also  sharp.}

\section{On the computational complexity of $h(G)$}
\label{sec:comp}

We start with a reduction from \textsc{3-partition} to the problem of computing $h_S(G)$. Then the latter is reduced to the problem of computing $h(G)$ in a graph having loops. Finally, using Lemma \ref{lem:bipartite}, we deduce that computing the hunting number is NP-hard in a bipartite graph.\\
Remember that an instance of \textsc{3-partition} is a multiset $S$ of $n$ positive integers $\{a_1,...,a_n\}$ with $n=3m$ for which we aim to decide whether there is a partition $S_1,...,S_m$ of $S$ such that the sum of the elements in each $S_j$ equals $\beta = 1/m \sum_{i=1}^n a_i$.
This problem is NP-hard even when the $a_i$ are bounded by a polynomial in $n$ and $\frac{\beta}{4} < a_i < \frac{\beta}{2}$ for $i=1,...,n$ \cite{gareyjohnson79}. Note that the latter condition implies that the $S_j$ are triplets.}

\begin{figure}[htbp]
    \centering
    \includegraphics[scale=0.45]{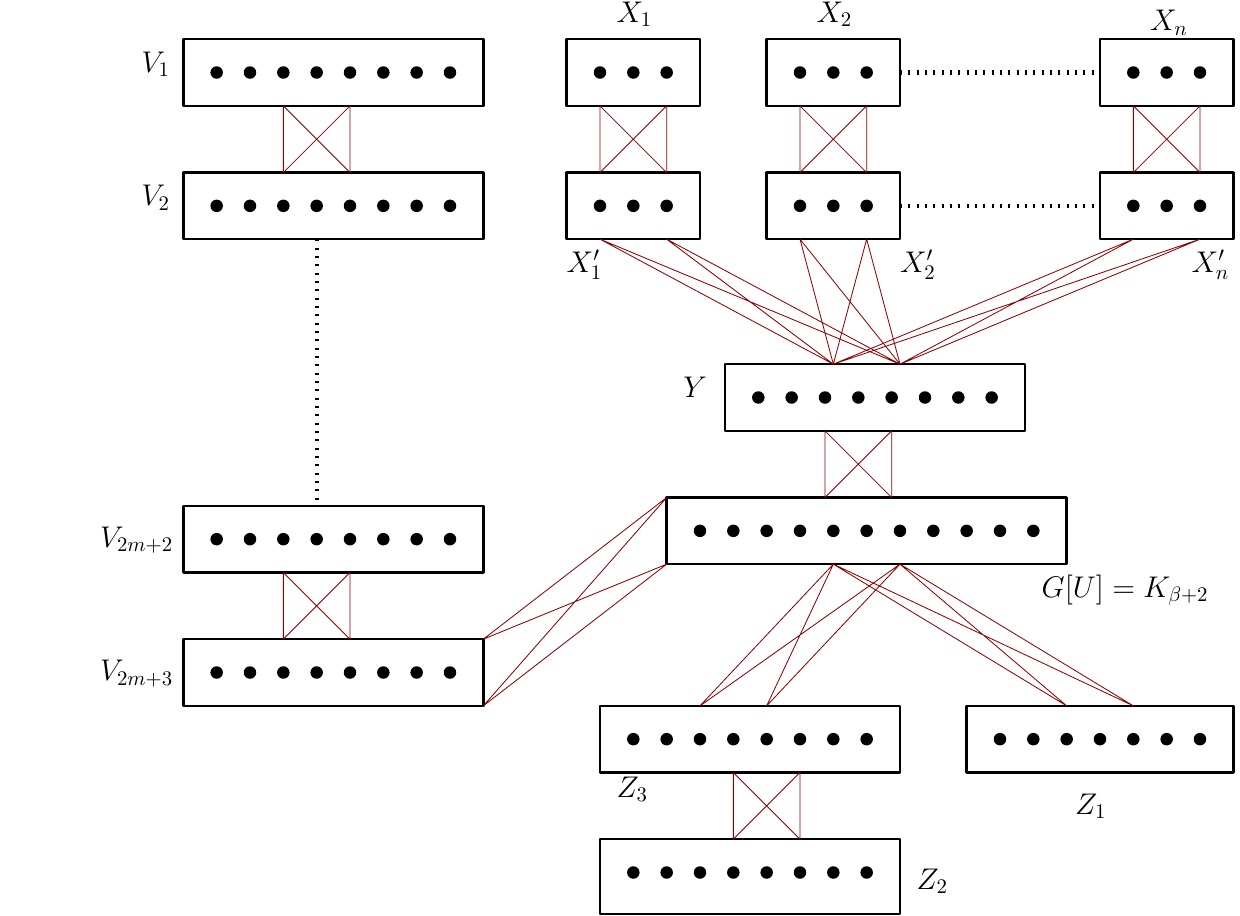}
    \caption{Illustration for the construction used in the Proof of Proposition~\ref{P:hard}. Here, each block of vertices other than $U$ is an independent set and $U$ is a clique. Further, $Y,Z_1,Z_2,Z_3, V_1,\ldots, V_{2m+3}$ contain $\beta$ vertices each, $U$ contains $\beta+2$ vertices, $X_i$ and $X'_i$ contain $a_i$ vertices (for $i\in [n]$). Finally, whenever two blocks, say $A$ and $B$, are illustrated to be connected by a red connection, 
    $A$ and $B$ are fully connected.}
    \label{fig:constrainedHardness}
\end{figure}

\begin{proposition}\label{P:hard}
It is  NP-hard to compute $h_S(G)$.
\end{proposition}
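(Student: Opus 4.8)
The plan is to reduce from \textsc{3-partition} restricted to instances with $\beta/4 < a_i < \beta/2$, using exactly the graph $G$ and starting set $S$ of Figure~\ref{fig:constrainedHardness}. I would fix a threshold $k$ for the number of hunters --- a linear function of $\beta$ dictated by the block size $\beta$ and by $|U| = \beta+2$ --- and prove the equivalence: the \textsc{3-partition} instance is a YES-instance if and only if $h_S(G) \le k$. The guiding intuition is that any economical winning strategy must \emph{sweep} the rabbit territory along the long chain $V_1, \dots, V_{2m+3}$ of $\beta$-blocks, decontaminating it from one end to the other, and that at the $m$ designated ``processing'' phases the hunters can only afford to cover a family of item-blocks $X_i$ of total size at most $\beta$. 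Because each $a_i$ lies strictly between $\beta/4$ and $\beta/2$, a group summing to exactly $\beta$ must be a triplet; hence the feasible processing phases correspond precisely to a partition of $\{a_1, \dots, a_n\}$ into $m$ triplets of sum $\beta$. The presence of exactly three auxiliary blocks $Z_1, Z_2, Z_3$ and the duplicated item-blocks $X'_i$ are the gadget's way of enforcing this triplet accounting and the parity alternation inherent to the game.

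For completeness (YES $\Rightarrow$ $h_S(G)\le k$), I would take a valid partition $S_1,\dots,S_m$ into triplets of sum $\beta$ and write down an explicit strategy. It advances a ``wall'' of roughly $\beta$ hunters one block at a time along $V_1,\dots,V_{2m+3}$, and at the $j$-th phase it temporarily redirects the residual budget to shoot the three item-blocks $X_i$ (and their copies $X'_i$) with $i \in S_j$; this fits the budget precisely because $\sum_{i\in S_j} a_i = \beta$. The clique $U$ and the buffer blocks $Y,Z_1,Z_2,Z_3$ are used to synchronize timing and absorb the parity shift, so that $R_T = \emptyset$ after a polynomial number of rounds.

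For soundness (strategy $\Rightarrow$ YES), I would show that any winning strategy with $k$ hunters is essentially forced. The core is a bottleneck/counting argument in the spirit of the lower-bound technique used in the proofs of Lemma~\ref{lem:C} and Lemma~\ref{lem:comp}: since the blocks have size $\beta$ and consecutive blocks are fully connected, holding the frontier of the decontaminated region already consumes a near-full $\beta$-block, leaving only about $\beta$ hunters for item-blocks at each step, and the territory cannot leak back past the wall without a budget overrun, so the sweep is monotone and single-pass. Tracking $R_t \cap X_i$ then shows each item-block is fully covered in exactly one phase, and the residual-budget bound forces the items covered together to sum to at most $\beta$; summing over all $m$ phases and using $\sum_i a_i = m\beta$ forces equality in every phase, which is exactly the desired partition.

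The main obstacle is this soundness direction, specifically proving that the sweep is forced to be monotone and single-pass: a clever adversary might try to process item-blocks out of order, split one item-block's coverage across several rounds, or exploit controlled recontamination to borrow hunters elsewhere. Ruling these out requires maintaining an invariant that the contaminated part of the chain is always an interval of blocks whose frontier costs a full $\beta$-block, and this is precisely where the auxiliary gadgetry ($U$, $Y$, the $Z_j$, and the duplicated $X'_i$) earns its keep, by penalizing any deviation from the intended single-pass sweep with a budget overrun that exceeds $k$.
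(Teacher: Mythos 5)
Your high-level plan (reduce from \textsc{3-partition} on the graph of Figure~\ref{fig:constrainedHardness}, with triplets forced by $\beta/4<a_i<\beta/2$ and a counting argument over $m$ phases of budget $\beta$) matches the paper's. But the soundness direction, which you rightly identify as the crux, is sketched with the wrong mechanism and would not go through as described. The threshold is exactly $k=\beta$, so one round's budget covers exactly one $\beta$-block: there is no ``residual budget'' allowing the hunters to hold a frontier on the $V$-chain and shoot item-blocks in the same round. Your per-round split (``holding the frontier consumes a near-full $\beta$-block, leaving about $\beta$ hunters for item-blocks'') implicitly assumes $k\approx 2\beta$, at which point the correspondence with triplets of sum $\beta$ collapses. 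Your invariant that the contaminated part of the chain is an interval of blocks held by a wall is also false: $G[\bigcup_i V_i]$ is a blown-up path, hence bipartite, so the territory there occupies alternating-parity blocks and is never ``held'' during the item-processing phase; no monotone single-pass sweep is forced, and none is needed.

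The missing idea is the actual forcing mechanism. $U$ is a clique of $\beta+2$ pairwise twin vertices, so $h(G[U])>\beta$ and the moment $R_t$ touches $U$ the rabbit has won; since $Y$, $Z_1$, $Z_3$ and $V_{2m+3}$ are fully connected to $U$ and the only exit from the $X$-gadget is through $Y$, this forces $W_1=Z_1$, $W_2=Z_3$, and $W_t=Y$ for \emph{every} odd $t$ until the $X$-blocks are clean. Each even round is then an exclusive choice: either cover some $X'_i$'s entirely (full coverage of an $X'_i$ in a single even round is forced because $X_i$ and $X'_i$ are fully connected), or shoot $V_{l+1}$ to hold back the chain. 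The quantity $l_t=\max\{i:R_t\cap V_i\neq\emptyset\}$ increases by one in every round where $V_{l+1}$ is not shot, and the rabbit escapes to $U$ once $l_t=2m+3$; so the $V$-chain is a timer with $2m+2$ rounds of slack, and infeasibility of the \textsc{3-partition} forces at least $m+1$ item-rounds, overrunning it. Correspondingly, the completeness strategy is two sequential stages ($Z_1,Z_3,Y,S_1,Y,\ldots,S_m$, then $V_{2m+3},V_{2m+2},\ldots,V_2$), not an interleaved wall-plus-phases schedule. Without the $U$-clique threat and the forced odd-round $Y$-shots, the adversarial deviations you worry about (out-of-order processing, recontamination) cannot be excluded.
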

\begin{proof}
Let $\mathcal{S}$ be an instance of \textsc{3-partition}. We show that   $\mathcal{S}$ admits a \textsc{3-partition} if and only if $h_S(G) = \beta$ where $G$ and $S$ are described below and shown in Figure~\ref{fig:constrainedHardness}. The instance $\mathcal{S}$ defined by the numbers $(a_i)_{i \in [n]}$ is chosen as described above. 

\smallskip
\noindent\textbf{Construction of $G$.} Let $Y,Z_1,Z_2, Z_3,V_1,\ldots,V_{2m+3}$ each be a set of $\beta$ independent vertices. Further, let $U$ be a set of $\beta+2$ vertices that induces a clique in $G$. Finally, let $X_1,\ldots,X_n,X'_1,\ldots,X'_n$ be sets of independent vertices such that $|X_i|=|X'_i|=a_i$. 

Now, for $i\in [2m+2]$, fully connect $V_i$ to $V_{i+1}$ (i.e., $G[V_i\cup V_{i+1}]$ induces a complete bipartite graph with both partitions containing $\beta$ vertices). Similarly, fully connect $X_i$ to $X'_i$, and $Z_2$ to $Z_3$. Moreover, $Z_3 \cup Z_1 \cup V_{2m+3}$ and $U$ are fully connected, and for $i\in [n]$, each $X'_i$ is fully connected to $U$. Finally, let $S = Z_1\cup Z_2\cup V_1 \cup X_1 \cup \cdots \cup X_n$ be the allowed starting positions of the rabbit. 

In one direction, suppose $\mathcal{S}$ admits a \textsc{3-partition} $S_1,\ldots,S_m$. Then, we claim that the following hunter strategy is a winning one (here with a slight abuse of notation we are indicating with $S_j$ the set $\bigcup_{i | a_i \in S_j}X_i'$): $Z_1, Z_3, Y, S_1, Y,  S_2, Y, S_3, Y,\ldots,S_m, V_{2m+3}, V_{2m+2},\ldots, V_2$. 
To ease the exposition, we provide a case by case analysis distinguished by the starting positions of the rabbit.
\begin{enumerate}
    \item The rabbit starts in $Z_1$: The rabbit is {shot} in the first round since $W_1 = Z_1$.
    \item The rabbit starts in $Z_2$: 
    The rabbit is {shot}  in the second round.
    \item The rabbit starts in $X_i$ for some $i\in [n]$:  In this case, first we establish that the rabbit will be restricted to vertices in $X_1,\ldots,X_n, X'_1,\ldots,X'_n$ until round $2m+{2}$. To this end, observe that if the rabbit needs to leave these vertices, it needs to reach a vertex in $Y$, and it can only do so {at an odd time step greater than $1$. But this is not possible since $W_{t}=Y$ for odd rounds $3\le t \le 2m+1 $.\\
    Note that $G[X_1\cup \ldots\cup X_n\cup X'_1\cup \ldots\cup X'_n]$ is bipartite, thus $R_t \subseteq X_1\cup \ldots \cup X_n$ when $t$ is odd and $R_t \subseteq X_1'\cup \ldots\cup X_n'$ when $t$ is even, until round $2m+2$. 
    Furthermore, observe that when $W_t\supseteq X'_i$, the set $X_i\cup X'_i$ becomes decontaminated and remains so for all subsequent rounds (until round $2m+2$)} unless the rabbit moves to some vertex in $Y$, which is not possible.
   Finally, let the subset $S_j$, $j\in [m]$, contain numbers $a_p,a_q,$ and $a_r$. Then, in the round $2j+2$, hunters shoot at all vertices in $X'_p,X'_q,X'_r$, and hence decontaminate
    $X_p\cup X'_p\cup X_q\cup X'_q \cup X_r \cup X'_r$ { for all subsequent rounds (until round $2m+2$)}. Since $S_1,\ldots, S_m$ form a partition, after $2m+2$ rounds all vertices in $X_1,\ldots,X_n, X'_1,\ldots,X'_n$ will be decontaminated and as noted above, the rabbit is restricted to only these vertices for all these rounds. Hence, the rabbit gets shot. 
    
    \item The rabbit starts in $V_1$: {Observe that the induced graph $G[\bigcup_{i \in[2m+3]} V_{i}]$ is bipartite, therefore, since $R_1=V_1$ and $W_{2m+3} = V_{2m+3}$, we have $R_{2m+3} = \bigcup_{i \in[m+1]} V_{2i-1} $. }
    Then at time $2m+4$, we have $W_{2m+4} = V_{2m+2}$ and  $R_{2m+4} = \bigcup_{i \in[m]} V_{2i}$. A simple induction on $t$ shows that, for $1\le t\le m+1$, $R_{2m + 2t} = \bigcup_{i \in[m-t+2]} V_{2i}$ and
    $R_{2m + 2t + 1} = \bigcup_{i \in[m-t+2]} V_{2i-1}$. Therefore, {$R_{4m+3} =V_1$,} $W_{4m+4} = V_2$ and thus $R_{4m+4} = \emptyset$.
\end{enumerate}

In the other direction, suppose $\mathcal{S}$ does not admit a \textsc{3-partition}. We will show that the hunters do not have a winning strategy using only $\beta$ hunters. Assume, by contradiction, that such a winning strategy exists. We begin by observing that once the rabbit reaches a vertex in $U$ {(i.e., $R_{t}\cap U \neq \emptyset$ for some $t>0$)}, then the rabbit can never be shot since the hunting number of $G[U]$ is greater than $\beta$. {Thus, to complete our proof, we only need to show that for any strategy of $k$ hunters the rabbit has a walk that ensures $R_{t}\cap U \neq \emptyset$ for some $t>0$. Furthermore, {observe that} all $\beta+2$ vertices of $U$ are twins and there are at most $\beta$ hunters, {thus} if $R_{t-1}\cap (Y \cup Z_3 \cup Z_1 \cup V_{2m+3}) \neq \emptyset$ for some $t>1$, then $R_{t}\cap U \neq \emptyset$.}
As  a consequence, we can safely assume that $W_1=Z_1$ and $W_2={Z_3}$ (otherwise the rabbit can reach $U$ in time step 2 and 3, respectively). Similarly, $W_3 = Y$, otherwise the rabbit has an escape strategy by starting on some $X_i$, moving to $X_i'$ in the second time step, then to some $v \in Y \setminus W_3$ in the third time step, and finally moving to $U\setminus W_4$ in the fourth time step.\\  
Let $T$ be the first time step such that $R_T \cap \bigcup_{i \in [n]} (X_i \cup X'_i) = \emptyset $.  For each odd time step $t$ between $3$ and $T$, $W_t = Y$ holds (otherwise the rabbit can reach $U$ in the next time step). Observe that $T$ is necessarily an even number. {Moreover, observe that since hunters are shooting at $Y$ in every odd time step, to decontaminate $X_i\cup X'_i$ the hunters must shoot at every vertex in $X'_i$ in some even round.} Furthermore, since hunters were able to decontaminate $\bigcup_{i \in [n]} (X_i \cup X'_i)$, they need to shoot at vertices in $\bigcup_{i \in [n]}  X'_i$ for at least $m+1$ (even) time steps. This is obviously due to the non-feasibility of the \textsc{3-partition} instance implying that it is not possible to cover all vertices of $\bigcup_{i \in [n]}  X'_i$ with only $m$ subsets of size $\beta$. For $ t \leq T$, let $l_t$ be the largest index such that $R_t \cap V_{l_t} \neq \emptyset$ (i.e., the index of the lowest $V_i$ in Figure \ref{fig:constrainedHardness} that is not decontaminated {at time $t$}). We clearly have $l_1 = 1$, $l_2 = 2$ and $l_3 = 3$.  Observe that if at some even time $t$ the hunters shoot at $V_{l_{t-1}+1}$, then $l_t = l_{t-1} - 1$. 
Otherwise, if the hunters shoot at other vertices ({i.e., $W_t \neq V_{l_{t-1}+1}$}), for example, in  $\bigcup_{i \in [n]}  X'_i$, then $l_t = l_{t-1} + 1$. If $t$ is odd,  $W_t = Y$ implying that $l_t = l_{t-1} + 1$.  Then, if we consider two consecutive time slots $t$ and $t+1$, we either have $l_{t+1} = l_{t-1}$ or $l_{t+1} = l_{t-1} + 2$.  
Since hunters have to shoot at $\bigcup_{i \in [n]}  X'_i$ for at least $m+1$  time steps, $l_{t+1}-l_{t-1}$ increases by $2$, at least $m+1$ times (for some odd $t$). Since $T$ is even and  hunters are shooting at (even partially)  $\bigcup_{i \in [n]}  X'_i$ at time $T$,  $l_{t+1}-l_{t-1}$ increases by $2$, at least $m$ times between $3$ and $T-2$. One can then write that {$l_{T-2} -l_2 = (l_{T-2} - l_{T-4}) + (l_{T-4}-l_{T-6})+\cdots + (l_{4} - l_2) \geq 2m$} implying that $l_{T-2} \geq 2m+2$ and $l_{T-1} \geq 2m+3$ (since $W_{T-1} = Y$). The rabbit can then reach $U$ at time $T$ implying that the hunter's strategy was not a winning one.  \end{proof}

\begin{figure}[htbp]
\centering
\vspace{-10mm}
\includegraphics
[scale=0.25]{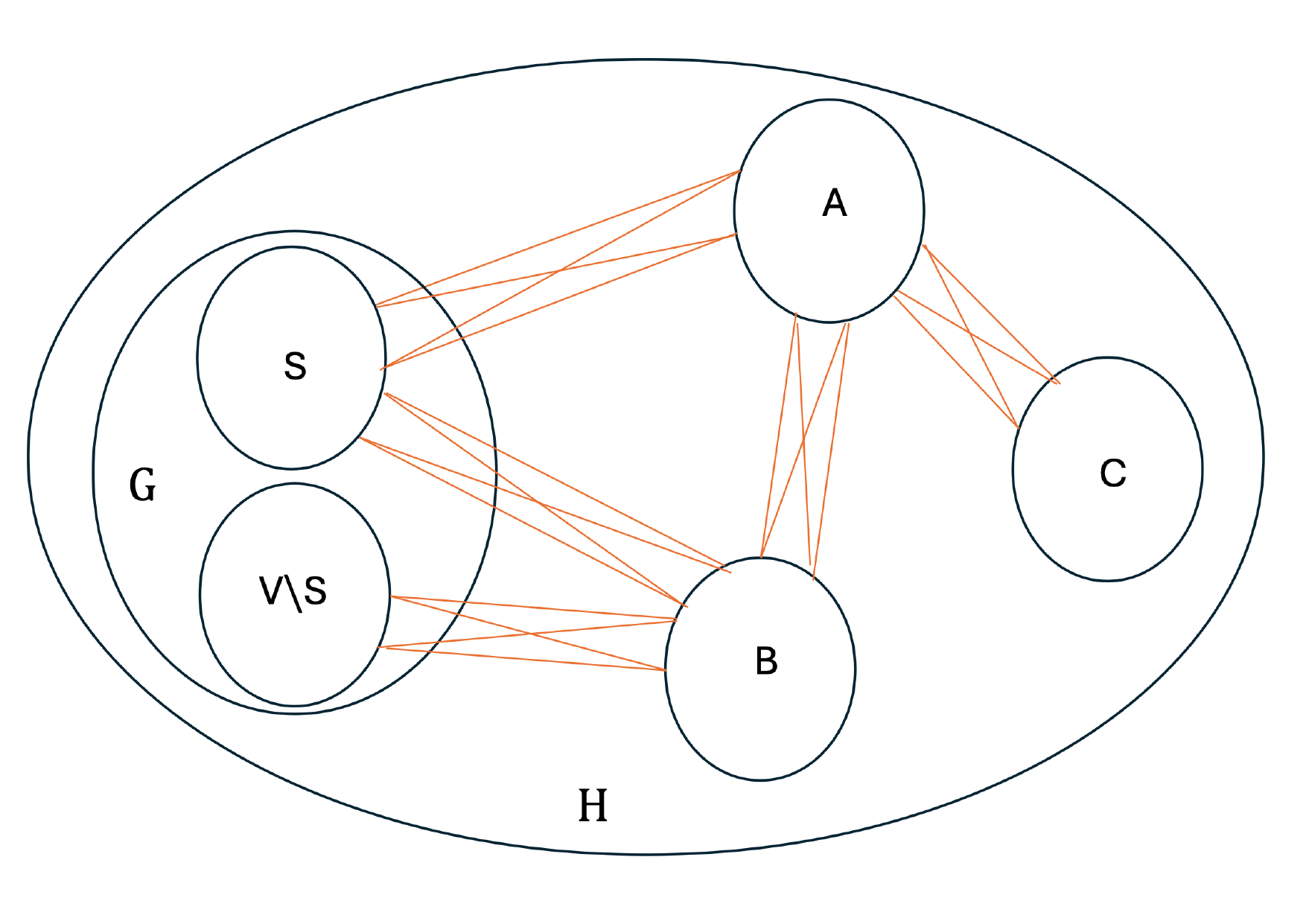} 
    \caption{Building $H$ from $G$ (Proposition \ref{pro:loop}): $1 \leq k \leq |S|$, $|A|=n-k$, $|B|=k$, $|C|=2k$, $H[A]={K}^{\circ}_{n-k}$, $H[B]={K}^{\circ}_{k}$ and $H[C]={K}^{\circ}_{2k}$ are complete graphs with loops.}
\label{fig:reduc2}
    \end{figure}

\begin{proposition}
It is NP-hard to compute $h(G)$ in a graph with loops.
\label{pro:loop}
\end{proposition}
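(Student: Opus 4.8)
The plan is to reduce the NP-hard problem of computing $h_S(G)$ (Proposition \ref{P:hard}) to that of computing the ordinary hunting number of the graph $H$ with loops built in Figure \ref{fig:reduc2}. The design rests on two facts about loop-cliques: a complete graph with loops ${K}^{\circ}_m$ satisfies $h({K}^{\circ}_m)=m$ (loops let the rabbit stand still, so its territory is decontaminated only when every vertex is shot), and, by the join bound of Lemma \ref{lem:comp}, attaching such a clique behaves additively. In $H$ the three loop-cliques play complementary roles: $A={K}^{\circ}_{n-k}$ is an ``overflow sink'' that the hunters are forced to keep clear with $n-k$ shots per round, so that exactly $k$ shots remain to be spent on the copy of $G$; $C={K}^{\circ}_{2k}$, reachable from $S$, is an inescapable trap, since clearing it needs $2k>k$ hunters, i.e. more than remain once $A$ is held; and $B={K}^{\circ}_{k}$ together with the first few rounds is used to dispose of the rabbit positions outside $S$. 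The target threshold is $N=n$ hunters, and I would prove $h(H)\le n \iff h_S(G)\le k$.

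For the forward direction, assume $h_S(G)\le k$, so $G$ admits a winning strategy $W_1,\dots,W_T$ using $k$ hunters when the rabbit starts in $S$. I would first spend a bounded number of rounds shooting $B$, $C$ and the non-$S$ part of the $G$-copy to guarantee that, from some round on, the rabbit territory inside the $G$-copy is contained in the set reachable from $S$; then I would run $W_1,\dots,W_T$ on the $G$-copy ($k$ shots per round) while simultaneously keeping $A$ decontaminated ($n-k$ shots), for a total of $n$ shots per round. Because $A$ is held throughout, the rabbit can never use the overflow region to re-enter an already cleared part, so the simulated strategy terminates exactly as it does on $G$; this yields $h(H)\le n$. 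The bookkeeping here is routine once the attachment of $A$, $B$, $C$ to $G$ is fixed.

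The reverse direction is where Lemma \ref{nodeconaminationS} is essential and where the main difficulty lies. Assume $h_S(G)>k$ and suppose, toward a contradiction, that $n$ hunters win on $H$. The crux is to argue that, in order not to lose immediately on the loop-cliques, at every round the hunters must commit $n-k$ shots to holding $A$, leaving at most $k$ shots to act on the $G$-copy; projecting such a strategy onto $G$ (as in the proofs of Lemmas \ref{lem:comp}) gives a strategy there with at most $k$ hunters. By Lemma \ref{nodeconaminationS}, no matter how these $k$ shots are played, $S$ is never decontaminated, so the rabbit can keep its territory intersecting $S$ for all time. I would then turn this never-dying walk on $G$ into an explicit escape walk in $H$: once the rabbit sits on $S$ it dives into the trap $C$, which cannot be cleared because only $k<2k$ hunters are free while $A$ is held. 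The delicate points, and the expected obstacle, are (i) ruling out the alternative in which the hunters abandon $A$ for some rounds, where one must show the rabbit escapes through the overflow region before $A$ can be re-secured, and (ii) synchronising the timing so that the rabbit's survival walk in $G$ and its jump into $C$ are realised by a single escape walk in $H$.

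Combining the two directions shows that deciding $h(H)\le n$ solves the \textsc{3-partition} instance underlying Proposition \ref{P:hard}, proving that computing $h(G)$ is NP-hard on graphs with loops. Finally, this result is the stepping stone to the bipartite case: applying Lemma \ref{lem:bipartite} to $H$ transfers the hardness to the simple bipartite graph $B_H$, for which $h(B_H)=h(H)$.
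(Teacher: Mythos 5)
Your overall architecture matches the paper's: the same gadget $H$ (Figure \ref{fig:reduc2}), the same use of Lemma \ref{nodeconaminationS} to show that $k$ residual hunters can never decontaminate $S$, and a final binary search / decision link back to Proposition \ref{P:hard}. However, there is a concrete numerical error that breaks your forward direction. Since $A$ is fully connected to $C$ and both are loop-cliques, $H[A\cup C]={K}^{\circ}_{n+k}$, and a complete graph with loops on $m$ vertices forces $h\ge m$ (a surviving rabbit anywhere in the clique regenerates the whole unshot clique at the next step). Hence $h(H)\ge n+k$ always, and your target threshold $h(H)\le n$ is unsatisfiable: the decision ``is $h(H)\le n$?'' always answers no and decides nothing. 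Concretely, the strategy you describe (hold $A$ with $n-k$ shots, simulate the $G$-strategy with $k$ shots) never clears $C$: a rabbit sitting on a loop in $C$ survives forever, since clearing $C$ requires $2k$ simultaneous shots on $C$ \emph{plus} $n-k$ on the adjacent $A$. The correct threshold is $n+k$, with the equivalence $h_S(G)\le k \iff h(H)=n+k$; the winning strategy then holds $A\cup B$ (that is $n$ shots, not $n-k$) while running the $S$-strategy with $k$ shots on $V$, and finishes with one round shooting all of $A\cup C$.

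A second, smaller inaccuracy: you describe $C$ as ``reachable from $S$,'' but in the construction $C$ is fully connected only to $A$, so the rabbit cannot ``dive from $S$ into $C$'' while $A$ is being held. This escape route is not needed anyway: once Lemma \ref{nodeconaminationS} gives $R_t\cap S\neq\emptyset$ for all $t$, the strategy is already not winning, which is how the paper closes the reverse direction. Finally, your reverse direction asserts that the hunters ``must commit $n-k$ shots to holding $A$ at every round''; the paper's argument is more careful here, anchoring the analysis at the \emph{last} time $\delta$ with $R_\delta\cap(A\cup B)\neq\emptyset$ and showing by induction that from $\delta+1$ onward $W_{t+1}\supset A\cup B$ is forced, leaving at most $k$ shots for $V\cup C$. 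You flag this as ``the expected obstacle'' but do not resolve it; the last-contamination-time argument is the missing idea.
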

\begin{proof}
We prove the result using a reduction from the problem of computing $h_S(G)$.  
Consider a graph $G=(V,E)$ and a subset  $S \subset V$ representing the possible initial positions of the rabbit. Let $n=|V|$ and  let $k$ be a number satisfying $1 \leq k \leq |S|$. We build a graph $H$ as follows. The graph $H$ contains $G$ as a subgraph in addition to $3$ complete subgraphs with loops: {$H[A]={K}^{\circ}_{n-k}$, $H[B]={K}^{\circ}_{k}$ and $H[C]={K}^{\circ}_{2k}$}. The set $A$ is fully connected to $B$, $C$ and $S$  while $B$ is fully connected to $V$ (and $A$) (see Figure \ref{fig:reduc2}). 
Observe that {$H[A \cup C]={K}^{\circ}_{n+k}$}  implying that $h(H) \geq n + k$. We claim that $h(H) = n+k$ if and only if $h_S(G) \leq k$. 
\begin{claim*}
$h_S(G) \leq k$, if and only if $h(H) = n+k$.
\end{claim*}
\begin{proof}
Assume that $h(H) = n +k$ and $h_S(G) > k \geq 1$. Let $W_1,...,W_T$ be a winning strategy in $H$ using $n+k$ hunters.\\ 
Let us first assume that there exists $t \geq 1$ such that $R_t \cap (A \cup B)  \neq \emptyset$.  Let $\delta$ be the last $t$ such that $R_t \cap (A \cup B)  \neq \emptyset$. 
Then, $W_{\delta + 1} \supset (A \cup B) $ holds, {implying $|W_{\delta + 1} \cap (V\cup C)| \leq k$}. \\
If $R_{\delta} \cap A \neq \emptyset$, then $R_{\delta+1} \cap C \neq \emptyset$ and $R_{\delta+1} \cap S \neq \emptyset$ since $|S| >k$ and $|C| >k$.  Using that $S$ (and even $V$) is connected to $B$ and $C$ is connected to $A$, we deduce that $W_{\delta + 2}$ should also contain $ A \cup B$ {and $R_{\delta+2} \cap C \neq \emptyset$}. By induction on $t \geq \delta + 1$, as long as $R_t \cap V \neq \emptyset$,  we  have $W_{t+1} \supset A \cup B$ and $R_{t+1} \cap C \neq \emptyset$.
Since the game is supposed to end,  
there exists some $T'$ between $\delta+1$ and $T$ such that $R_{T'} \cap V = \emptyset$. {Observe that between rounds $\delta+1$ and $T'$ at most $k$ hunters can shoot at vertices in $V$, while $R_{\delta}$ contains a vertex (of $A$) fully connected with $S$. Therefore, by considering the game played between rounds $\delta+1$ and $T'$ restricted to $V$, we obtain $h_S(G)\le k$, which is a contradiction.\\ }
Assume now that $R_{\delta} \cap B \neq \emptyset$, then $R_{\delta + 1} \cap S \neq \emptyset$ implying that $W_{\delta + 2} \supset (A \cup B) $ (therefore,  $|W_{\delta + 2} \cap V| \leq k$). Since the rabbit {can enter} $V$ (from $B$) at time $1 + \delta$ and $k <h_S(G)$, we can deduce from Lemma \ref{nodeconaminationS} that $R_{\delta + 2} \cap S \neq \emptyset$.
{A simple} 
 induction on $t \geq \delta + 1$ {shows that we will always  have $W_{t+1} \supset A \cup B$ and $R_{t+1} \cap S \neq \emptyset$ implying that} the game will never end. \\
 {Note that a crucial fact in the proof of the first (resp. second) case is that the rabbit {can enter} $S$ (resp. $V$) at time $1+\delta$, and at most $k <h_S(G)$ hunters can {shoot at vertices of} $V$ from that time on.\\} Let us now assume that $\delta $ does not exist which is equivalent to say that $R_t \cap (A \cup B) = \emptyset$ $\forall t \geq 1$. We consequently have $W_1 \supset (A \cup B)$, $R_1 \cap V \neq \emptyset$ and $R_1 \cap {S \neq \emptyset}$. 
 The situation is then similar to the previous case where we had $R_{\delta} \cap B \neq \emptyset$ since the rabbit can be at any vertex of $V {\setminus W_1}$ at time $1$. The same induction on $t$ shows that $W_{t+1} \supset A \cup B$ and $R_{t+1} \cap S \neq \emptyset$ implying that the game is endless. \\
Finally, let us now show that $h(H) = n+k$ when $k \geq h_S(G)$. This  can be done by providing a winning strategy. A possible one starts with $W_1 = B \cup V$ leading to $R_1 = A \cup C$.  Assume that $W'_1, W'_2,..., W'_p$ is a winning strategy (for some time $p$) allowing to {shoot} the rabbit in $G$ if he starts at $S$. We can then take $W_2 = W'_1 \cup A \cup B$, $W_3 = W'_2 \cup A \cup B$, etc. So $W_{p+1} =W'_p \cup A \cup B$. At time $p+1$ we have $R_{p+1} = C$. Therefore, by setting   $W_{p+2} = A  \cup C$, the whole graph is decontaminated. 
    \end{proof}
The claim immediately proves the result since one can compute $h_S(G)$ by  computing the {hunting number} of $O(log(n))$ graphs of type $H$ for different values of $k$. 
    \end{proof}

We are now able to state the main complexity result. 

\begin{theorem}
It is NP-hard to compute $h(G)$ in a  bipartite graph.
\label{th:main}
\end{theorem}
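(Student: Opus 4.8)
The plan is to combine the two results already established in this section rather than build anything new. By Proposition~\ref{pro:loop}, computing $h(G)$ is NP-hard on graphs that may contain loops, so I would take an arbitrary such instance $G$ (with loops) and apply the bipartite transformation $G \mapsto B_G$ introduced just before Lemma~\ref{lem:bipartite}. Recall that $B_G$ is the tensor product of $G$ with $K_2$: its vertex set is $V \cup V'$, every regular edge $vw$ of $G$ becomes the two edges $v'w$ and $w'v$, and every loop $vv$ becomes the single edge $v'v$.

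First I would check that $B_G$ meets the requirements of the theorem. It is bipartite because the two classes $V$ and $V'$ are stable sets by construction, every edge of $B_G$ joining a vertex of $V$ to a vertex of $V'$; it contains no loops, as already noted in the excerpt; and, under the paper's standing assumption that $G$ has neither isolated vertices nor parallel edges, $B_G$ inherits both properties (each non-isolated vertex of $G$ stays non-isolated in both copies, and distinct edges of $G$ produce distinct edges of $B_G$).

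Next I would simply invoke Lemma~\ref{lem:bipartite}, which yields $h(G) = h(B_G)$. Since $B_G$ can be constructed from $G$ in time polynomial in $n(G)$, this gives a polynomial-time reduction from computing the hunting number on graphs with loops (NP-hard by Proposition~\ref{pro:loop}) to computing it on simple bipartite graphs. The NP-hardness of the latter follows at once.

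There is essentially no hard step here: the entire difficulty of the theorem has been front-loaded into Propositions~\ref{P:hard} and~\ref{pro:loop}, and Lemma~\ref{lem:bipartite} performs the remaining work of eliminating loops while preserving the hunting number exactly. The only point that merits a line of verification is that the gadget $B_G$ genuinely produces a simple bipartite graph satisfying the paper's blanket non-degeneracy assumptions, which the second paragraph above handles; I do not expect any obstacle beyond that bookkeeping.
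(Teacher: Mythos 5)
Your proposal is correct and matches the paper's proof exactly: the paper also derives Theorem~\ref{th:main} as an immediate consequence of Proposition~\ref{pro:loop} combined with Lemma~\ref{lem:bipartite} via the $B_G$ construction. Your extra verification that $B_G$ is a simple bipartite graph without isolated vertices is sound bookkeeping that the paper leaves implicit.
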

\begin{proof}
This is a consequence of Lemma \ref{lem:bipartite} and Proposition  \ref{pro:loop}.
    \end{proof}


Let us focus now on the existence of polynomial-time approximation algorithms with additive guarantees. 
An obvious $O(n)$ additive guarantee is given by the upper bound $n$ for the {hunting number}.  We prove that  it is not possible to do much better than $O(n)$.  
\begin{theorem}
It is NP-hard to additively approximate $h(G)$ within $O(n^{1 - \epsilon})$ for any constant $\epsilon >0$. \label{th:add}
\end{theorem}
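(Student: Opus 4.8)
The plan is to take the unit-gap hardness already in hand and amplify it \emph{multiplicatively} with the tensor-product operator $C_\cdot^{p}$ of Lemma~\ref{lem:C}, whose defining identity $h(C_G^p)=p\cdot h(G)$ converts an additive gap of $1$ into an additive gap of $p$ while enlarging the vertex set only by the factor $p$. Choosing $p$ to be a suitable polynomial in the instance size will then make the gap overtake $N^{1-\epsilon}$, which is exactly what rules out an additive $O(N^{1-\epsilon})$ approximation.

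First I would isolate an explicit gap instance from the previous reductions. Fixing $k=\beta$ in Proposition~\ref{pro:loop} and feeding it the output of Proposition~\ref{P:hard}, a \textsc{3-partition} instance is mapped in polynomial time to a graph $H$ with loops such that $h(H)=\beta+n(G)$ when the instance is positive and $h(H)\ge \beta+n(G)+1$ otherwise, where $G$ denotes the graph produced by Proposition~\ref{P:hard}. Writing $M:=\beta+n(G)$ for the positive-case value $h(H)$, one has $n(H)=2M$, and $M$ is bounded by a polynomial in the input size (recall $\beta$ is polynomially bounded). Thus positive instances give $h(H)=M$ and negative ones give $h(H)\ge M+1$: a reduction with additive gap exactly $1$.

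Next I would amplify and bipartize. For a parameter $p\ge 2$ to be fixed, form $C_H^{p}$ and then $\Gamma:=B_{C_H^{p}}$. By Lemma~\ref{lem:C}, $h(C_H^{p})=p\cdot h(H)$, and by Lemma~\ref{lem:bipartite} the passage to the simple bipartite graph $\Gamma$ preserves the hunting number; hence positive instances yield $h(\Gamma)=pM$ and negative ones yield $h(\Gamma)\ge p(M+1)=pM+p$, an additive gap of $p$, while $N:=n(\Gamma)=2\,n(C_H^{p})=2p\cdot n(H)=4pM$. Now suppose, for a fixed $\epsilon>0$ and toward a contradiction, that some polynomial-time algorithm approximates $h$ within additive error $c\,N^{1-\epsilon}$ for a constant $c$. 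I would set
\[
p=\left\lceil (2c)^{1/\epsilon}\,(4M)^{(1-\epsilon)/\epsilon}\right\rceil+2,
\]
which is polynomial in $M$ (hence in the input size) since $c,\epsilon$ are constants, so $\Gamma$ is constructible in polynomial time. Raising this to the power $\epsilon$ gives $p^{\epsilon}\ge 2c\,(4M)^{1-\epsilon}$, and multiplying by $p^{1-\epsilon}$ yields $p\ge 2c\,(4pM)^{1-\epsilon}=2c\,N^{1-\epsilon}$. Consequently the algorithm outputs at most $pM+c\,N^{1-\epsilon}$ on positive instances and at least $pM+p-c\,N^{1-\epsilon}>pM+c\,N^{1-\epsilon}$ on negative ones, so thresholding at $pM+c\,N^{1-\epsilon}$ decides \textsc{3-partition}, a contradiction.

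The substance of the argument lives entirely in the gap-$1$ reduction of Propositions~\ref{P:hard} and~\ref{pro:loop}; the amplification is routine. The one point genuinely requiring care is that the target order $N=4pM$ itself depends on the amplification factor $p$, so the inequality $p\ge 2c\,N^{1-\epsilon}$ is self-referential; resolving it is precisely what forces $p$ to be polynomial of degree about $1/\epsilon$ in $M$, after which the gap $p$ provably outgrows $c\,N^{1-\epsilon}$.
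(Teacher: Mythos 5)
Your proof is correct and uses the same central device as the paper: amplification via the tensor construction $C_G^{p}$ of Lemma~\ref{lem:C} with $p$ a polynomial of degree roughly $1/\epsilon$, so that the multiplied gap overtakes the allowed additive error $O(N^{1-\epsilon})$. The only (inessential) difference is in how the contradiction is closed: you unwind Propositions~\ref{P:hard} and~\ref{pro:loop} to extract an explicit additive gap of $1$ and then amplify that gap, whereas the paper argues more abstractly that an additive $O(n^{1-\epsilon})$ approximation applied to $C_G^{p}$, divided by $p$, pins down the integer $h(G)$ exactly and thus contradicts Theorem~\ref{th:main} as a black box.
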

\begin{proof}
Assume that we have a polynomial-time approximation algorithm with an $O(n^{1 - \epsilon})$ additive guarantee. Given any graph $G$, let us build the graph $C_G^p$ with $p= \lceil n^{2\frac{1-\epsilon}{\epsilon}}\rceil $. Using the approximation algorithm, we get an upper bound $u$ satisfying inequalities $h(C^p_G) \leq u \leq h(C^p_G) + O({(np)}^{1-\epsilon})$. From  Lemma \ref{lem:C}, we know that $h(C^p_G) = p \times h(G)$ implying that $ \frac{u}{p} -\frac{1}{p} O({(np)}^{1-\epsilon}) \leq h(G) \leq \frac{u}{p}$.  Using that $p= \lceil n^{2\frac{1-\epsilon}{\epsilon}} \rceil$ leads to $\frac{u}{p} - O(\frac{1}{n^{1-\epsilon}}) \leq   h(G)  \leq  \frac{u}{p}$. Since $h(G)$ is integer and $O(\frac{1}{n^{1-\epsilon}})$ is negligible,
we get a polynomial-time algorithm to compute $h(G)$ (the construction is obviously polynomial for constant $\epsilon$). Theorem \ref{th:main} allows to conclude.
\end{proof}

Since graphs for which $h(G) = 1$ or $h(G)= n$ are well characterized, one might expect polynomial-time algorithms for either small values of $h$ or large values of $h$. The next theorem states that this is not the case {when {either} $h$ {or} $n-h$ are upper bounded by a (small) power of $n$}.  

\begin{theorem}
It is NP-hard to compute $h(G)$ for simple graph instances where  $n - h(G) = O(n^{\epsilon})$ (resp. $h(G) = O(n^{\epsilon})$) for any constant $\epsilon > 0$.
\end{theorem}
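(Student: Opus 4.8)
The plan is to leverage the two gadget constructions already established in the excerpt, namely the tensor product $C_G^p$ from Lemma~\ref{lem:C} and the join with a clique-with-loops from Lemma~\ref{lem:comp}, in order to ``tune'' the hunting number into the desired range while preserving NP-hardness. The starting point is Theorem~\ref{th:main}, which gives a bipartite graph $G$ on $n$ vertices for which computing $h(G)$ is NP-hard. I would treat the two regimes separately, since they pull the hunting number in opposite directions: for $n-h(G)=O(n^\varepsilon)$ I need to make $h$ large relative to the order, and for $h(G)=O(n^\varepsilon)$ I need to make $h$ small relative to the order.

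For the regime $h(G)=O(n^{\varepsilon})$, the natural move is to blow up the graph by a large multiplicative factor using $C_G^p$. By Lemma~\ref{lem:C} we have $h(C_G^p)=p\cdot h(G)$ while $n(C_G^p)=p\cdot n$. Choosing $p$ to be a suitable polynomial in $n$, say $p=\lceil n^{c}\rceil$ for a constant $c=c(\varepsilon)$ chosen large enough, forces $h(C_G^p)=p\cdot h(G)$ to be at most $(\text{order})^{\varepsilon}$: since the new order is $N:=p\cdot n$ and $h(G)\le n$, one wants $p\cdot n \le (p\cdot n)^{\varepsilon}\cdot(\text{slack})$, which is arranged by taking $c$ large so that $N^{\varepsilon}=(pn)^{\varepsilon}$ dominates $pn/n^{1-\text{something}}$; I would solve the exponent inequality explicitly to pin down $c$. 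Crucially, since $h(C_G^p)=p\cdot h(G)$ is an exact identity and $p$ is known, recovering $h(G)$ from $h(C_G^p)$ is trivial, so the reduction is parsimonious and the construction is polynomial-time for constant $\varepsilon$. One subtlety is that $C_G^p$ uses ${K}^\circ_p$ and may introduce loops, so to stay within simple graphs I would apply Lemma~\ref{lem:bipartite} afterwards, replacing the constructed graph by its bipartite double $B_{C_G^p}$, which preserves the hunting number and only doubles the order.

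For the complementary regime $n-h(G)=O(n^{\varepsilon})$, I would instead push $h$ up to nearly the order by joining with a clique-with-loops. By the remark following Lemma~\ref{lem:comp}, $h(G\nabla {K}^\circ_q)=h(G)+q$, while the order becomes $n+q$, so $n(G\nabla{K}^\circ_q)-h(G\nabla{K}^\circ_q)=n-h(G)$; this alone does not shrink the gap relative to the new order unless the order itself grows. The right combination is therefore to first blow down the ``free'' part by taking a graph whose order is only slightly larger than $h$: concretely I would take $C_G^p$ to make the hunting number large and then join with a clique-with-loops of size chosen so that $n-h$ becomes a small power of the (now large) order, again solving the exponent inequality to fix the parameters, and finally pass to $B_{(\cdot)}$ via Lemma~\ref{lem:bipartite} to land in simple (indeed bipartite) graphs. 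The hunting number of the final graph is again an exact, efficiently invertible function of $h(G)$, so NP-hardness transfers directly from Theorem~\ref{th:main}.

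The main obstacle I anticipate is purely bookkeeping in the exponents: one must choose the blow-up exponent $c$ (and, in the second regime, the clique size) as explicit functions of $\varepsilon$ so that the target bound $h=O(N^{\varepsilon})$ or $N-h=O(N^{\varepsilon})$ holds for the \emph{new} order $N$, not the old $n$, and simultaneously keep all sizes polynomial in $n$ so the reduction runs in polynomial time. A second point requiring care is the interaction with Lemma~\ref{lem:bipartite}: passing to $B_G$ doubles the order, which perturbs the relation between $h$ and $N$ by a constant factor and must be absorbed into the choice of constants; since $h(B_G)=h(G)$ exactly and $n(B_G)=2n(G)$, this is harmless but should be verified so the inequalities $h\le N^{\varepsilon}$ (resp. $N-h\le N^{\varepsilon}$) are not broken by the factor of two. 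No genuinely new combinatorial idea seems necessary beyond composing the three lemmas; the content is in the exponent arithmetic and in verifying that the hunting number of each composite graph is recoverable in polynomial time.
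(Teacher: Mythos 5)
Both halves of your plan run into concrete obstructions. For the regime $h(G)=O(n^{\epsilon})$, the tensor blow-up $C_G^p$ cannot work: Lemma~\ref{lem:C} gives $h(C_G^p)=p\cdot h(G)$ while $n(C_G^p)=p\cdot n$, so the ratio $h/N$ is exactly preserved. The inequality you propose to ``solve for $c$'', namely $p\cdot h(G)\le (pn)^{\epsilon}$ with $h(G)$ possibly as large as $n$, has no solution for any $p\ge 1$ when $\epsilon<1$, since it would require $pn\le(pn)^{\epsilon}$. To make $h$ small relative to the order you must add mass that does \emph{not} scale $h$; the paper simply pads $G$ with a stable set of size $\Omega(n^{1/\epsilon})$, which leaves $h$ unchanged while inflating the order, and then hardness transfers directly from Theorem~\ref{th:main}.

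For the regime $n-h(G)=O(n^{\epsilon})$, the join with ${K}^{\circ}_q$ does give the exact identity $h(G\nabla {K}^{\circ}_q)=h(G)+q$ and the right exponent arithmetic, but your fix for the loops is fatal: passing to the bipartite double $B_H$ keeps $h(B_H)=h(H)$ while doubling the order, so $n(B_H)-h(B_H)=2n(H)-h(H)\ge n(H)$, which is $\Theta(n(B_H))$ and not $O(n(B_H)^{\epsilon})$. The whole point of this regime is that $h$ is within $N^{\epsilon}$ of $N$, and $B_{(\cdot)}$ destroys that. The paper's way around this is to join with the \emph{loopless} clique $K_k$ (with $n\le k^{\epsilon}$), which keeps the graph simple but, by Lemma~\ref{lem:comp}, only determines $h(G)$ up to an additive error of $1$; consequently one cannot reduce from exact computation (Theorem~\ref{th:main}) as you intend, and must instead invoke the additive inapproximability result of Theorem~\ref{th:add}, since an exact algorithm on the padded instances yields a constant-additive approximation of $h(G)$. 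This use of the inapproximability theorem is the missing idea in your second reduction; without it (or some other device for handling the $\pm 1$ slack in a loop-free construction) the argument does not close.
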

\begin{proof}
Given a simple graph $G$ with $n = |V(G)|$, take $k$ so that $n \le k^{\epsilon}$ and  consider the graph $G \nabla {K}_k$.  Notice that $G \nabla {K}_k$ does not contain loops.  
From Lemma \ref{lem:comp}, we have $h(G)+k-1 \leq h(G \nabla {K}_k)  \leq  h(G) +k$ implying that {$h(G \nabla {K}_k) -k \leq  h(G) \leq  h(G \nabla {K}_k) -k+1$.} Furthermore $n(G \nabla {K}_k) - h(G \nabla {K}_k) \leq  (n+k)- h(G) -k+1\le n \le (n+k)^{\epsilon}$, therefore  $n(G \nabla {K}_k) - h(G \nabla {K}_k)$ is $O(n(G \nabla {K}_k)^{\epsilon})$.   A polynomial-time algorithm  computing the {hunting number} if  {$n-h=O(n^{\epsilon})$, applied on $G \nabla {K}_k$,} can  be used to get a lower bound and an upper bound for $h(G)$, whose difference   is $1$. In other words, we have a constant additive  approximation for $h(G)$ contradicting Theorem \ref{th:add}. \\
To prove NP-hardness for instances where $h = O(n^{\epsilon})$, one can start from a graph $G$ and add a stable set of size $\Omega(n^{\frac{1}{\epsilon}})$ to get a new graph $H$ for which $h(G) = h(H) = O(n(H)^{\epsilon})$.    \end{proof}

\section{Hunting the rabbit within some time limit}

A natural extension of the game consists in imposing a time limit $l$, i.e., the rabbit should be shot no later than time $l$. 
For $l=1$, we obviously need $n$ hunters, while $h(G)$ hunters are needed when there is no time limit. More generally, let $h(G,l)$ denote the minimum number of hunters needed to guarantee the capture of rabbit no later than $l$. Clearly, $h(G,l)$ is a non-increasing function of $l$. {Recently, it was established that to obtain $h(G) = h(G,l)$, $l$ may required to be super-polynomial in $n(G)$~\cite{althoetmar2025}. We begin with the following observation.}
\begin{observation}\label{polyconstantkl}
When $l$ and $k$ are constant, a brute force search provides a polynomial algorithm to determine whether $h(G,l)\le k$.
\end{observation}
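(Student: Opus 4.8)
The plan is to bound the total number of hunter strategies of length $l$ using at most $k$ hunters, and then check each one. First I would observe that since both $l$ and $k$ are constants, the relevant search space is finite and of polynomial size. Specifically, at each of the $l$ time steps the hunters choose a subset $W_t \subseteq V$ with $|W_t| \le k$, and the number of such subsets is $\sum_{j=0}^{k} \binom{n}{j} = O(n^k)$, which is polynomial in $n$ because $k$ is fixed. Hence the number of candidate strategies $(W_1, \ldots, W_l)$ is $O(n^{kl})$, again polynomial since both $k$ and $l$ are constant.

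The second step is to show that verifying whether a single fixed strategy $(W_1, \ldots, W_l)$ is a winning strategy within the time limit $l$ can be done in polynomial time. Here I would simply track the rabbit territory forward using the recurrence given in Section~\ref{sec:nota}: set $R_1 = V \setminus W_1$ and $R_t = N(R_{t-1}) \setminus W_t$ for $2 \le t \le l$. Each computation of $N(R_{t-1})$ and the subsequent set difference takes time polynomial in $n$ (at most $O(n^2)$ per step, using the adjacency structure of $G$), and there are exactly $l$ steps. The strategy captures the rabbit no later than time $l$ if and only if $R_l = \emptyset$, so the verification reduces to checking whether this single set is empty after the forward sweep.

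Combining the two steps, the brute-force algorithm enumerates all $O(n^{kl})$ candidate strategies, runs the polynomial-time verification on each, and answers \emph{yes} precisely when some enumerated strategy yields $R_l = \emptyset$. The total running time is the product of the number of strategies and the per-strategy verification cost, which is polynomial in $n$ whenever $k$ and $l$ are constants. This establishes that $h(G,l) \le k$ can be decided in polynomial time.

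I do not expect any genuine obstacle here, as the statement is essentially a counting argument paired with the straightforward forward simulation of the rabbit territory; the only point requiring a little care is to confirm that it suffices to consider strategies of length exactly $l$ (rather than longer ones), which is immediate from the definition of $h(G,l)$ since capture must occur no later than time $l$ and any shots after time $l$ are irrelevant to whether $R_l = \emptyset$.
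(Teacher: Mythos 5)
Your proposal is correct and is exactly the intended argument: the paper states this as an observation without proof, and your enumeration of the $O(n^{kl})$ candidate strategies combined with the forward simulation of the rabbit territory $R_t = N(R_{t-1})\setminus W_t$ is the standard justification. The remark that checking $R_l=\emptyset$ suffices (since $R_t=\emptyset$ propagates forward) is a sensible detail to include.
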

   In contrast, we will prove that computing $h(G,l)$ is NP-hard for every $l \geq 2 $ that can be upper bounded by a polynomial in the order of $G$. The main steps of the proof are the following. 
We first show that it is NP-hard to compute $h(G,2)$ based on some connections with separator problems. Then we prove that $h(G,3)$ is also NP-hard to compute. Afterwards, we provide a reduction from the computation of  $h(G,l)$ to the computation of $h(G,l+2)$, thus allowing us to prove the main theorem.

Let us now recall some well-known results related to vertex separators.
An $\alpha$-vertex separator $C$ is generally defined as a subset of vertices such that $V(G) \setminus C = A \cup B$ where $|A| \leq \alpha n$,  $|B| \leq \alpha n$ and there are no edges between $A$ and $B$. We know from \cite{BUI} that computing a minimum size separator {is} NP-hard even for $\alpha = 1/2$.   Notice that $A$ and $B$ generally have different sizes.
A more restrictive version called vertex bisection is studied in \cite{Balance}  where we require that $| |B|-|A| | \leq 1$. The vertex bisection problem is also NP-hard.  
 Let us  consider the slightly 
modified problem, entitled \emph{exact vertex bisection problem}, where we look for a minimum size separator $C$ separating $V \setminus C$ into two equal size disjoint subsets $A$ and $B$, described as follows. 
{\small
\begin{equation*}
(\mbox{exact vertex bisection problem}): 
\left\{\begin{array}{ll}
&\min |C|   \\
& (A, B, C) \mbox{ is a partition of } V(G) \\ &|A|=|B| \\
& N(A) \subseteq A \cup C,  N(B) \subseteq B \cup C  \\
\end{array}
\right.
\end{equation*}
}
{We will denote by $EVB(G)$ the minimum value of the above problem (i.e., $|C|$).}

\begin{lemma}
The exact vertex bisection problem is NP-hard.
\label{lem:exact} 
\end{lemma}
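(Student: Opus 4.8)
The plan is to reduce from the \emph{vertex bisection problem} of \cite{Balance}, which asks for a minimum separator $C$ with $V\setminus C=A\cup B$, no edges between $A$ and $B$, and $||A|-|B||\le 1$; denote its optimal value on a graph $H$ by $b(H)$. The exact version studied here demands $|A|=|B|$ in place of $||A|-|B||\le 1$, so every exact bisection is in particular a vertex bisection and therefore $EVB(H)\ge b(H)$ always holds. The entire gap between the two quantities is that an optimal vertex bisection may be forced to split $V\setminus C$ into two parts differing by exactly one vertex, a configuration the exact version forbids. I would bridge this gap by supplying a single ``free'' vertex that can absorb one unit of imbalance without creating any new edge.

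Concretely, given an instance $G=(V,E)$ I would let $G'$ be $G$ together with one extra isolated vertex $z$, and establish the identity
\[
b(G)=\min\bigl(EVB(G),\,EVB(G')\bigr).
\]
For the bound $\min(EVB(G),EVB(G'))\ge b(G)$, note that an optimal exact bisection of $G$ has imbalance $0\le 1$, hence is a feasible vertex bisection of $G$, giving $EVB(G)\ge b(G)$; moreover, from an optimal exact bisection $(A,B,C)$ of $G'$, deleting $z$ from whichever of $A,B,C$ contains it leaves a separator of $G$ of cost at most $|C|$ whose imbalance is at most $1$ (removing one vertex changes $||A|-|B||$ by at most one), so $EVB(G')\ge b(G)$ as well. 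For the reverse bound, I would take an optimal vertex bisection $(A_0,B_0,C_0)$ of $G$ with $|C_0|=b(G)$: if $|A_0|=|B_0|$ it is already an exact bisection of $G$, whence $EVB(G)\le b(G)$; if instead $|A_0|=|B_0|+1$, then placing $z$ on the $B$-side yields $(A_0,\,B_0\cup\{z\},\,C_0)$, an exact bisection of $G'$ of cost $b(G)$ since the isolated $z$ creates no edge across the partition, whence $EVB(G')\le b(G)$.

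Combining the two bounds proves the displayed identity, so two invocations of any algorithm for exact vertex bisection (one on $G$, one on $G'$) compute $b(G)$ in polynomial time; as vertex bisection is NP-hard \cite{Balance}, exact vertex bisection is NP-hard as well. The one delicate point, and the heart of the reduction, is reconciling the unit of slack permitted by vertex bisection with the rigid requirement $|A|=|B|$: the extra vertex $z$ acts precisely as a parity switch that realizes the single admissible value of the imbalance, and the $\min$ over $G$ and $G'$ accounts for the fact that we do not know in advance which parity the optimal bisection of $G$ attains. Since this is an auxiliary separator problem, admitting the single isolated vertex is harmless; if one insists on ruling out isolated vertices, $z$ can be handled by a minor, routine modification of the construction.
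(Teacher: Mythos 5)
Your proof is correct and follows essentially the same route as the paper: reduce from the vertex bisection problem of \cite{Balance} by solving the exact version on both $G$ and $G$ augmented with one isolated vertex, and take the better of the two separators. Your write-up of the identity $b(G)=\min\bigl(EVB(G),EVB(G')\bigr)$ is in fact slightly more careful than the paper's sketch (e.g., you explicitly handle the case where the added vertex lands in $C$).
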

\begin{proof}
We already mentioned that when we only require $| |B|-|A| | \leq 1$, computing a minimum size separator is known to be NP-hard \cite{Balance}. Moreover, it is easy to reduce the slightly relaxed  problem to  the one with exact equality. We only have to solve two instances with equality constraints: we solve one problem on the original graph $G$  where we require that $|A|=|B|$, and we solve a second one by adding one isolated  vertex to $G$.  In the second case, there exists an optimal separator $C$ not containing the newly added vertex, and the obtained subsets $A$ and $B$ in the new graph have the same size and will then induce, in the original graph,  two sets whose sizes differ by $1$. An optimal solution of the vertex bisection problem is just obtained by considering the best separator among the two obtained above.
\end{proof}

Consider the case where $l=2$. It is easy to see that $h(G,2)$ is given by  $\min_{S \subset V} \max(|S|,|N(\overline{S})|)$ (where $\overline{S}= V \setminus S$). Indeed, $S$ can be seen as $W_1$ while $N(\overline{S})$ is $W_2$.
We can also write that $h(G,2) = \min_{S \subset V} \max(|\overline{S}|,|N({S})|)$.  

  To handle the  case $l=2$, we need to introduce another separator problem that might be entitled \emph{bisection stable separator problem}. We look for a subset $C$ separating $V(G) \setminus C$ into $3$ subsets: $A$, $B$ and $D$ such that $|A|=|B|$, $D$ is a stable set, and there are no edges joining any two sets among $A$, $B$ and $D$. We aim to minimize $|C|-|D|$. The problem can formally be described as follows. 
{\small
\begin{equation*}
(\mbox{bisection stable separator problem}): 
\left\{ \begin{array}{ll}
&\min |C| - |D|  \\
& (A, B, C, D) \mbox{ is a partition of } V(G)
\\
& |A|=|B|,  N(D) \subseteq C \\
& N(B) \subseteq B \cup C,  N(A) \subseteq A \cup C  \\
\end{array}
\right.
\end{equation*}
}
{We will denote by $BSS(G)$ the minimum value of the above problem (i.e., $|C|-|D|$).}

We show below the link between $h(G,2)$ and the bisection stable separator problem.

\begin{lemma} Given any graph $G = (V,E)$, $
h(G,2) = \frac{1}{2} \Big( n + BSS(G) \Big)
$.
\label{lem:h2g}
\end{lemma}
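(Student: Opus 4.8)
The plan is to prove the two inequalities separately, relying on the characterization $h(G,2) = \min_{S \subseteq V} \max(|S|, |N(\overline{S})|)$ recalled just before the statement, where $\overline{S} = V \setminus S$ is the rabbit territory $R_1$ left after the first shot $W_1 = S$, and $N(\overline{S})$ is the optimal choice of $W_2$.

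For the inequality $h(G,2) \le \frac{1}{2}\big(n + BSS(G)\big)$, I would start from an optimal solution $(A,B,C,D)$ of the bisection stable separator problem, so that $|A| = |B|$, $N(A) \subseteq A \cup C$, $N(B) \subseteq B \cup C$, $N(D) \subseteq C$, and $|C| - |D| = BSS(G)$. Taking $W_1 := B \cup C$ gives $\overline{W_1} = A \cup D$, and the separation constraints yield $N(A \cup D) = N(A) \cup N(D) \subseteq A \cup C$. Hence, setting $W_2 := N(\overline{W_1})$, we get $|W_1| = |B| + |C| = |A| + |C|$ and $|W_2| \le |A| + |C|$, so this two-round strategy uses $|A| + |C|$ hunters. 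Since $2|A| + |C| + |D| = n$ (using $|A| = |B|$), a direct computation gives $|A| + |C| = \frac{1}{2}(n + |C| - |D|) = \frac{1}{2}(n + BSS(G))$, which is the claimed upper bound.

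For the reverse inequality I would first argue that the minimum defining $h(G,2)$ is attained at some $S$ with $|S| \ge |N(\overline{S})|$. Starting from any optimal $S$, as long as $|S| < h(G,2)$ I move an arbitrary vertex of $\overline{S}$ into $S$: this increases $|S|$ by one while $N(\overline{S})$ can only shrink, so optimality is preserved and the process stops with $|S| = h(G,2) \ge |N(\overline{S})|$. Writing $R := \overline{S}$, I then split $R$ into the set $M$ of vertices non-isolated in $G[R]$ (a union of nontrivial connected components of $G[R]$) and the set $R_0$ of isolated ones, and set $A := M$, $D := R_0$. Since $A$ is a union of components of $G[R]$ and the vertices of $R_0$ have all their neighbors in $S$, both $N(A) \subseteq A \cup (N(R) \cap S)$ and $N(D) \subseteq N(R) \cap S$ hold. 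Because $N(R) \cap R = M$, one computes $|S \setminus N(R)| = |S| - |N(R)| + |M| \ge |M|$ using $|S| \ge |N(R)|$; hence I can take $B$ to be any $|M|$ vertices of $S \setminus N(R)$ and set $C := S \setminus B$, which guarantees $N(R) \cap S \subseteq C$ and $N(B) \subseteq B \cup C$ (as $B$ has no neighbor in $R$). This $(A,B,C,D)$ is feasible with $|A| = |B|$, and its value is $|C| - |D| = (|S| - |M|) - |R_0| = 2|S| - n = 2h(G,2) - n$, giving $BSS(G) \le 2h(G,2) - n$ and thus $h(G,2) \ge \frac{1}{2}(n + BSS(G))$.

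The main obstacle is the balance requirement $|A| = |B|$: the naive reconstruction forces $A$ to contain all nontrivial components of $G[R]$, whose total size can exceed the number of ``free'' vertices of $S$ precisely when $|N(R)| > |S|$. The normalization making $|S| \ge |N(\overline{S})|$ optimal is exactly what rules this out and makes the bisection achievable, so I expect the verification that this normalization preserves optimality, together with the bookkeeping identity $|N(R) \cap R| = |M|$, to be the delicate part of the argument.
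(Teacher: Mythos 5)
Your proposal is correct, and while the easy direction ($BSS(G)\Rightarrow$ strategy) matches the paper's, your argument for the hard direction is genuinely different. The paper takes an optimal winning strategy $W_1,W_2$, pads so that $|W_1|=|W_2|$, and reads the partition directly off the shot sets: $C=W_1\cap W_2$, $A=W_1\setminus W_2$, $B=W_2\setminus W_1$, $D=\overline{W_1\cup W_2}$; balance $|A|=|B|$ is then immediate, and the separation properties follow from short escape-walk arguments (e.g.\ an edge between $B\cup D$ and $A$ would let the rabbit survive). You instead work from the $\min_{S}\max(|S|,|N(\overline S)|)$ formula, normalize so that $|S|=h(G,2)\ge |N(\overline S)|$, and rebuild the partition from the structure of the rabbit territory $R=\overline S$: $A$ is the set of non-isolated vertices of $G[R]$, $D$ the isolated ones, and $B$ is carved out of $S\setminus N(R)$ by the counting identity $|S\setminus N(R)|=|S|-|N(R)|+|M|\ge |M|$. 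Both are sound; the paper's decomposition is shorter and makes the value computation $|W_1|=|C|+|A|=\tfrac12(n+|C|-|D|)$ immediate, whereas your route trades the ``WLOG $|W_1|=|W_2|$'' padding step for an explicit normalization and a balance count, and has the minor merit of producing a witness partition in which $D$ is as large as possible (all vertices never needed in $W_2$), not just the never-shot vertices of one particular strategy.
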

\begin{proof}
Consider a winning strategy $W_1, W_2$ in $G$ such that $\max(|W_1|,|W_2|)= h(G,2)$. We can assume that $|W_1|= |W_2|$.
Let $C = W_1 \cap W_2$, $B = W_2 \setminus W_1$, $A = W_1 \setminus  W_2$ and $D = \overline{W_1 \cup W_2}$. From $|W_1|=|W_2|$ we get that $|A|=|B|$. $D$ is obviously a stable set, because otherwise the rabbit would not be captured using the strategy $W_1, W_2$. Moreover, there are no edges between   $B$ and $A$, nor between $D$ and $A$, since a rabbit starting at $B \cup D = \overline{W_1}$ can survive by moving  to $A$ at the second time step.  Similarly, there are no edges between $B$ and $D$,  since a rabbit starting at $B$ can survive by moving  to $D$ at the second time step. $(A,B,C,D)$ is then a feasible bisection stable separator. 
Furthermore,   $n = |C|+ 2|A|+|D|$ implying that $h(G,2)=|W_1|= |C|+ |A|= n/2 + (|C|-|D|)/2$. \\
Conversely, an optimal bisection stable separator defined by a partition $(A,B,C,D)$ can obviously be used to construct a winning strategy by taking $W_1 = C \cup A$ and $W_2 = C \cup B$, ending the proof. \end{proof}

To conclude that $h(G,2)$ is difficult to compute, we need to show that the bisection stable separator problem is NP-hard. 
{To do that, we will reduce exact vertex bisection  to the bisection stable separator problem.  While the reduction in Lemma \ref{lem:exact} might produce an isolated vertex, the construction in next lemma does not give rise to a graph with isolated vertices. In other words, the bisection separator problem is NP-hard even for simple graphs not containing isolated vertices.} 

\begin{lemma}
The bisection stable separator problem is NP-hard.
\label{lem:bisec}
\end{lemma}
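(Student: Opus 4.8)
The plan is to reduce the exact vertex bisection problem, which is NP-hard by Lemma~\ref{lem:exact}, to the bisection stable separator problem. Given an instance $G$ of exact vertex bisection, I want to construct a graph $G'$ such that solving the bisection stable separator problem on $G'$ recovers an optimal exact vertex bisection of $G$. The fundamental difficulty is that the two problems have different objectives: exact vertex bisection minimizes $|C|$ while bisection stable separator minimizes $|C|-|D|$ over a partition $(A,B,C,D)$ with an \emph{extra} stable part $D$. The crux of the reduction is therefore to engineer $G'$ so that any good bisection stable separator is forced to take $D=\emptyset$ (or at least to make the $D$-part contribute nothing beneficial), thereby collapsing the objective $|C|-|D|$ down to $|C|$ and aligning the two problems.

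First I would take $G$ and attach gadgets that penalize placing vertices in a stable set $D$. The natural way to neutralize the $-|D|$ term is to ensure that every vertex of the original graph, if placed in $D$, would force extra vertices into $C$ via the constraint $N(D)\subseteq C$; one builds $G'$ by adding, to each original vertex $v$, enough pendant/clique structure so that assigning $v$ to $D$ costs at least as much in $C$ as it saves in $|D|$. A clean implementation is to replace each vertex $v$ by a small clique (of loops-free type, to stay within simple graphs without isolated vertices) so that no vertex can cheaply join a stable set: if any member of such a gadget is put in $D$, its clique-neighbors must go to $C$, wiping out the $-|D|$ gain. I would then argue that an optimal bisection stable separator of $G'$ has $D=\emptyset$, at which point the constraints $|A|=|B|$, $N(A)\subseteq A\cup C$, $N(B)\subseteq B\cup C$ are exactly the exact vertex bisection constraints, and the objective $|C|-|D|=|C|$ is exactly $EVB$.

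The key steps, in order, are: (i) describe the construction of $G'$ from $G$, keeping it simple and free of isolated vertices so the strengthened statement at the end of the lemma's preamble holds; (ii) show the forward direction, namely that any exact vertex bisection $(A,B,C)$ of $G$ lifts to a feasible bisection stable separator of $G'$ with $D=\emptyset$ and the same objective value, giving $BSS(G')\le EVB(G)$; (iii) show the reverse direction, that from any optimal bisection stable separator $(A,B,C,D)$ of $G'$ one can argue $D$ may be taken empty without increasing the objective, and then read off an exact vertex bisection of $G$ with $|C|$ equal to the objective, giving $EVB(G)\le BSS(G')$; (iv) conclude $EVB(G)=BSS(G')$ and invoke Lemma~\ref{lem:exact} for NP-hardness. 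The balance condition $|A|=|B|$ must be preserved under the construction, so whatever gadgets I add should be added symmetrically or in a way that does not break the parity/size-matching of the two sides.

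The main obstacle I expect is step (iii): controlling the stable part $D$. The whole point of the new problem is that $D$ can absorb a stable set for free (it is subtracted in the objective), so I must prove that in $G'$ this option is never advantageous. This amounts to a careful exchange argument showing that any vertex placed in $D$ can be moved (together with the $C$-vertices its presence forced) into one of $A$, $B$, or $C$ without increasing $|C|-|D|$ and without violating $|A|=|B|$ or the separation constraints. Getting the gadget exactly right so that this exchange always works — and simultaneously guaranteeing no isolated vertices and preserving the balance constraint — is where the real design effort lies; the two directions of the value equality are then routine once the gadget has the desired rigidity.
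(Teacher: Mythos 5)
Your construction is essentially the one the paper uses (blow each vertex of $G$ up into a clique $H_v$, joining $H_v$ and $H_w$ completely whenever $vw\in E(G)$), but the analysis you plan in step (iii) has a genuine flaw: it is \emph{not} true that an optimal bisection stable separator of the blown-up graph $G'$ can be taken with $D=\emptyset$, so the exact value identity $BSS(G')=EVB(G)$ (or its scaled version $BSS(G')=M\cdot EVB(G)$) you aim for is false. The reason is that moving a single vertex of some clique $H_v$ from $C'$ into $D'$ is always feasible (its neighbors are already in $C'$) and always \emph{improves} the objective $|C'|-|D'|$ by $2$; the clique gadget does not ``wipe out'' the $-|D|$ gain, it only caps it at $2$ per clique, since a stable set meets each $H_v$ in at most one vertex. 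Concretely, if $G$ is a single edge then $EVB(G)=2$, while $G'=K_{2M}$ has $BSS(G')=2M-2$, attained with $|D'|=1$: no exchange argument can empty $D'$ without increasing the objective. So the two optima do not coincide, and your step (iv) as stated cannot be carried out.

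The repair, which is what the paper does, is to give up on value equality and prove only the threshold equivalence $EVB(G)\le k \iff BSS(G')\le kM$, choosing the clique size $M$ \emph{large} (e.g.\ $M=3n$) rather than ``small'' as you suggest. Since $D'$ is stable it satisfies $|D'|\le n$, and after exchange arguments showing that cliques may be assumed not to be split between $A'$ (resp.\ $B'$) and $C'$ --- here the balance constraint $|A'|=|B'|$ together with a divisibility-by-$M$ argument is needed to rule out a single partially-covered clique in $B'$, a step that is not quite ``routine'' --- one gets $|C'|\ge M|C|-n$, hence $M|C|\le BSS(G')+2n\le kM+2n$ and so $|C|\le k$ because $2n<M$. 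In short: the gadget is right, but you need the $O(n)$ slack coming from $D'$ to be absorbed by the scale factor $M$ of the objective, not eliminated outright.
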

\begin{proof}
We prove NP-hardness using a reduction from the exact vertex bisection problem considered in Lemma \ref{lem:exact}. Consider an instance of the exact vertex bisection problem  given by a graph $G=(V,E)$.  Assume that $(A, B,C)$ is an optimal exact vertex bisection and let $EVB(G) = |C|$. \\ 
Let $M$ be a large number (for example, $M = 3n$) and consider the graph $G'=(V',E')$ obtained from $G$ by replacing each vertex $v \in V$ by a clique  $H_v$ of size $M$, and adding edges between all vertices of $H_v$ and $H_w$ whenever $vw \in E$.  For convenience, we overload the notation $H_v$ to represent both the clique $H_v$ and the vertices of $H_v$. 
Assume that $(A', B',C',D')$ is an optimal solution of the bisection stable separator problem related to $G'$ and let $BSS(G')= |C'|-|D'|$.  We prove that $EVB(G) \leq k$, if and only if, $BSS(G') \leq k M $.\\
Assume that $EVB(G) \leq k$ and consider the related partition $(A, B,C)$. One can then build a feasible solution of the bisection stable separator problem by setting $A'= \bigcup_{v \in A} H_v$, $B'= \bigcup_{v \in B} H_v$, $C'= \bigcup_{v \in C} H_v$ and $D'=\emptyset$. Therefore, $BSS(G') \leq k M $.  \\
Conversely, assume that $BSS(G') \leq  k M$ and let $(A', B', C',D')$ a corresponding optimal partition.  Since $D'$ is a stable set, $|D'| \leq n$ holds  (at most one vertex from any $H_v$ can be in $D'$).  Observe that if $H_v \cap B' \neq \emptyset$, then $H_v \cap A' = H_v \cap D' = \emptyset$. 
Assume that there are two vertices $v, w \in V$ such that $H_v \cap B' \neq \emptyset$, $H_v \cap C' \neq \emptyset$,  $H_w \cap A' \neq \emptyset$ and $H_w \cap C' \neq \emptyset$. Then one can move one vertex from $H_v \cap C'$ to $B'$ and one vertex from  $H_w \cap C'$ to $A'$ leading to a better bisection stable separator (since $|C'|$ is reduced while $|D'|$ did not change). Without loss of generality, we can then assume that if $H_v \cap A' \neq \emptyset$, then the whole clique $H_v$ is inside $A'$. We consequently have $|A'| = M r$ where $r$ is some integer number.  
Assume that the number of vertices that are partially in $B'$ is greater than or equal to $2$:  $|\{v \in V: H_v \cap B' \neq \emptyset  \mbox{ and }  H_v \cap C' \neq \emptyset \}| \geq 2$.   Let then  $v, w \in V$ be such that $H_v \cap B' \neq \emptyset$,  $H_v \cap C' \neq \emptyset$, $H_w \cap B' \neq \emptyset$ and $H_w \cap C' \neq \emptyset$, and assume{, without loss of generality,} that $|H_w \cap C'| \leq |H_v \cap B'|$, then one can move all vertices of $H_w \cap C'$ to $B'$ and move $|H_w \cap C'|$ vertices from  
$H_v \cap B'$ to $C'$. Observe that $|A'|$, $|B'|$, $|C'|$ and $|D'|$ did not change, while the number of vertices that are partially in $B'$ is reduced  (since $H_w$ is now fully in $B'$).
We can therefore assume that $|\{v \in V: H_v \cap B' \neq \emptyset  \mbox{ and }  H_v \cap C' \neq \emptyset \}| \leq 1$.  Assume that $|\{v \in V: H_v \cap B' \neq \emptyset  \mbox{ and }  H_v \cap C' \neq \emptyset \}| = 1$. Then $B'$ contains a certain number of cliques of size $M$ with another partial clique ($H_v \cap B'$ for the only vertex for which  $H_v \cap B' \neq \emptyset$ and   $H_v \cap C' \neq \emptyset$). In other words, $|B'| =M s + t $ where $s$ is an integer number and $1 \leq t \leq M-1$. This is obviously not possible since $|B'|=|A'|= M r$.   As a consequence, $|\{v \in V: H_v \cap B' \neq \emptyset  \mbox{ and }  H_v \cap C' \neq \emptyset \}|=0$  implying that $B'$ contains only full cliques of size $M$.   Let us then define $B$ and $A$ as follows: $B = \{v \in V: H_v \subset B'\}$ and $A = \{v \in V: H_v \subset A'\}$. Observe that $|A|= |B|$ and there are no edges between $A$ and $B$ in $G$. We can also define $C = V \setminus (A \cup B)$.  Since $BSS(G') = |C'|-|D'| \leq k M  $, $|D'| \leq n$  and $|C'| \geq M |C| - n$, we get that $M |C| - 2n \leq  k M$ leading to $|C| \leq k$. Since $(A, B, C)$ is an exact vertex bisection, we deduce that $EVB(G) \leq k$. \\ The equivalence between $EVB(G) \leq k$ and $BSS(G') \leq kM$ is then proved. Since $G'$ is of polynomial size (for $M = 3n$), Lemma \ref{lem:exact} implies that the bisection stable separator problem is NP-hard. 
\end{proof}

\begin{proposition}
It NP-hard to compute $h(G,2)$ for simple graphs.
\label{prop:hG2}
\end{proposition}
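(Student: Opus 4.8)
The plan is to derive the proposition immediately from the two preceding lemmas, since together they establish an exact, efficiently invertible relationship between $h(G,2)$ and a problem already known to be hard. Concretely, Lemma~\ref{lem:h2g} gives the identity $h(G,2) = \frac{1}{2}\big(n + BSS(G)\big)$, which I would rearrange as $BSS(G) = 2\,h(G,2) - n$. Since $n = n(G)$ is read off the input in constant time, any polynomial-time procedure that computes $h(G,2)$ would, through this formula, yield a polynomial-time procedure computing $BSS(G)$. I would then invoke Lemma~\ref{lem:bisec}, which asserts that computing $BSS$ (the bisection stable separator value) is NP-hard, to conclude that computing $h(G,2)$ is NP-hard as well.

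The only point requiring a word of care is that the statement restricts attention to \emph{simple} graphs. Here I would lean on the remark preceding Lemma~\ref{lem:bisec}: the gadget graph $G'$ produced in that reduction (obtained from the exact vertex bisection instance by blowing up each vertex into a clique $H_v$ of size $M = 3n$) is a simple graph with no isolated vertices. Thus the NP-hardness of $BSS$ already holds on simple graphs, and since Lemma~\ref{lem:h2g} is valid for every graph $G$, applying the identity to these same instances $G'$ transfers the hardness to $h(\cdot,2)$ over simple graphs without any additional construction. In effect the chain is: \textsc{exact vertex bisection} $\to$ \textsc{bisection stable separator} (Lemma~\ref{lem:bisec}) $\to$ computing $h(G',2)$ (Lemma~\ref{lem:h2g}), with each arrow a polynomial-time reduction and the last one an exact linear translation of objective values.

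I do not expect any genuine obstacle in this proposition itself; the substantive work has already been done upstream. The conceptual difficulty lay in Lemma~\ref{lem:h2g}, where one must recognize $h(G,2)$ as $\min_{S\subseteq V}\max(|S|,|N(\overline{S})|)$ and repackage the two shooting sets $W_1,W_2$ into the separator partition $(A,B,C,D)$ with $A=W_1\setminus W_2$, $B=W_2\setminus W_1$, $C=W_1\cap W_2$, $D=\overline{W_1\cup W_2}$, checking that the stability and non-adjacency constraints are exactly the conditions for the rabbit not to escape in two rounds. The technical difficulty lay in Lemma~\ref{lem:bisec}, namely the clique-blowup argument forcing each $H_v$ to sit entirely inside one of $A',B',C'$ and the parity/divisibility argument ($|A'|=|B'|=Mr$) ruling out partial cliques in $B'$. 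Given both, the proof of the proposition reduces to stating the linear relation and citing the two lemmas, which is what I would write.
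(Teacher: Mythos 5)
Your proposal is correct and matches the paper exactly: the paper proves Proposition~\ref{prop:hG2} as a direct consequence of Lemmas~\ref{lem:bisec} and~\ref{lem:h2g}, which is precisely the chain you describe. Your additional remark about the blown-up graph $G'$ being simple is also consistent with the paper's comment preceding Lemma~\ref{lem:bisec}.
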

\begin{proof}
This is a direct consequence of Lemmas \ref{lem:bisec} and \ref{lem:h2g}.
\end{proof}

Additional bounds for $h(G,2)$ can also be obtained.  
Let $M(G)$ denote the matching number of $G$, that is, the maximum size of a matching in $G$.  
We write $VC(G)$ for the minimum size of a vertex cover of $G$.  
For any graph $G$, we will also make use of the bipartite graph $B_G$, defined earlier in Section~\ref{sec:prel}.

\begin{proposition}\label{matchingvc} The following inequalities hold:
$$\lceil \frac{VC(G)}{2} \rceil \leq M(G) \leq \lceil \frac{VC(B_G)}{2} \rceil \leq h(G,2) \leq VC(G) \leq 2 M(G) \leq VC(B_G).$$
\label{pro:ineq}
\end{proposition}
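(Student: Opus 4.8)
The plan is to verify the six links of the chain one at a time, grouping them according to the tool each requires. Two of the links, $\lceil VC(G)/2\rceil \le M(G)$ and $VC(G)\le 2M(G)$, are simply the two halves of the standard sandwich $M(G)\le VC(G)\le 2M(G)$: taking both endpoints of a maximum matching produces a vertex cover, which gives $VC(G)\le 2M(G)$, and dividing by $2$ and rounding up (legitimate since $M(G)$ is an integer) gives $\lceil VC(G)/2\rceil \le M(G)$. For the link $h(G,2)\le VC(G)$ I would exhibit an explicit two–round strategy: letting $K$ be a minimum vertex cover, shoot $W_1=W_2=K$. Then $R_1=V\setminus K$ is a stable set, so every neighbour of a surviving vertex lies in $K$, i.e. $N(V\setminus K)\subseteq K=W_2$, forcing $R_2=\emptyset$; hence $h(G,2)\le |K|=VC(G)$.

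The three remaining links all pass through the bipartite graph $B_G$, and the key auxiliary fact is König's theorem, which applies because $B_G$ is bipartite and yields $VC(B_G)=M(B_G)$. To obtain $2M(G)\le VC(B_G)$ I would lift a maximum matching $\{v_iw_i\}_{1\le i\le M(G)}$ of $G$ into $B_G$: each edge $v_iw_i$ spawns the two edges $v_i'w_i$ and $w_i'v_i$ of $B_G$, and since the endpoints $v_i,w_i$ are pairwise distinct, these $2M(G)$ edges are pairwise vertex-disjoint in $B_G$. Thus $M(B_G)\ge 2M(G)$, and König gives $2M(G)\le VC(B_G)$; dividing by $2$ and using that $M(G)$ is an integer then yields $M(G)\le \lceil VC(B_G)/2\rceil$.

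The one genuinely new link, which I expect to be the main obstacle, is $\lceil VC(B_G)/2\rceil \le h(G,2)$, equivalently $VC(B_G)\le 2\,h(G,2)$. Here I would use the characterisation $h(G,2)=\min_{S\subset V}\max(|S|,|N(\overline S)|)$ recalled just above the proposition, reading $S$ as the first shot $W_1$. Fix an optimal $W_1$, set $R=V\setminus W_1$ and $W_2=N(R)$, so that $\max(|W_1|,|W_2|)=h(G,2)$. The claim is that $K:=W_1\cup W_2'$, where $W_2'=\{v':v\in W_2\}$ denotes the primed copy of $W_2$, is a vertex cover of $B_G$. Indeed, for an edge $v'w$ of $B_G$ arising from $vw\in E(G)$: if $w\in W_1$ the edge is covered, while if $w\notin W_1$ then $w\in R$, whence $v\in N(R)=W_2$ and the endpoint $v'$ lies in $W_2'$; the symmetric argument covers the edge $w'v$. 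Therefore $VC(B_G)\le |W_1|+|W_2|\le 2\max(|W_1|,|W_2|)=2\,h(G,2)$, and rounding gives $\lceil VC(B_G)/2\rceil \le h(G,2)$. The delicate points are getting the orientation right in $B_G$, since each undirected edge of $G$ produces two edges and both directions of the covering condition must be checked, and making sure the optimal strategy may be taken with $W_2=N(V\setminus W_1)$; once the cover $W_1\cup W_2'$ is identified the verification is routine. Chaining the six inequalities in order then completes the proof.
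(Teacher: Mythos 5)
Your proposal is correct and follows essentially the same route as the paper: the standard matching/vertex-cover sandwich, the strategy $W_1=W_2=K$ for a minimum vertex cover, lifting a matching of $G$ to $2M(G)$ disjoint edges of $B_G$ together with K\H{o}nig's theorem, and turning a two-round winning strategy into the vertex cover $W_1\cup W_2'$ of $B_G$. The only difference is cosmetic — you spell out the verification that $W_1\cup W_2'$ covers every edge of $B_G$, which the paper merely asserts.
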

\begin{proof}
If $S$ is a vertex cover of $G$ then we can apply the strategy $W_1 = W_2 = S$ to catch the rabbit showing that $h(G,2) \leq VC(G)$.  
Moreover, $VC(G) \leq 2M(G)$ is a standard result (the extremities of edges of a maximum matching define a vertex cover). 
Consider the bipartite graph $B_G$ introduced in Section \ref{sec:prel}. An edge $uv$ of $G$ leads to edges $uv'$ and $u'v$ of $B_G$. As a consequence, a matching in $G$ of size $M(G)$ will lead to a matching of size $2 M(G)$ in $B_G$ implying that $2 M(G) \leq M(B_G)$. Since $B_G$ is bipartite, $VC(B_G) = M(B_G)$ showing the rightmost inequality. 

Let $W_1, W_2$ be a winning strategy with $\max(|W_1|,|W_2|)=h(G,2)$. 
Since  $V(B_G) = V \cup V'$ where $V$ is the set of vertices of $G$ and $V'$ is a copy of $V$, we might project $W_1$ (resp. $W_2$) on $V$ (resp. $V'$) to get  $S_1 = \{v \in V: v \in W_1\}$ and $S_2 = \{v' \in V': v \in W_2\}$ (remember that $v'$ is just a copy of $v$).  Then, $S_1 \cup S_2$ is a vertex cover of $B_G$. Therefore,  $VC(B_G) \leq |S_1| + |S_2| \leq  2 h(G,2)$.  Finally, the two leftmost inequalities are exactly the same as the two rightmost inequalities.
\end{proof}
As an example, consider the case where $G = K_n$. Then $h(G,2) = n - 1$, $VC(G) = n -1$, $M(G) = \lfloor n/2 \rfloor$ and $VC(B_G) = n$ showing that the upper bounds here are almost tight.   On the other hand, if $G$ is the union of $2$ disjoint cliques (each of size $n/2$), then $h(G,2)=n/2$, $VC(G)=n - 2$, $M(G) = 2 \lfloor n/4 \rfloor$ and $VC(B_G) = n$ leading to tight lower bounds.

An immediate consequence of Proposition \ref{pro:ineq} is that $h(G,2)$ can be approximated within a $2$ multiplicative ratio. A general $l$-factor approximation will be presented at the end of this section. Using König's theorem, we can also deduce that $h(G,2)$ is  easy to compute when $G$ is bipartite.

\begin{corollary}
If $G$ is bipartite, then $h(G,2) = M(G) = VC(G)$  and can then be computed in polynomial time. 
\end{corollary}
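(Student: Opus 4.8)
The plan is to leverage the chain of inequalities established in Proposition~\ref{pro:ineq} together with the classical König theorem, so that the corollary reduces to a collapse of these inequalities when $G$ is bipartite. Recall that Proposition~\ref{pro:ineq} gives
\[
\Big\lceil \tfrac{VC(G)}{2} \Big\rceil \leq M(G) \leq \Big\lceil \tfrac{VC(B_G)}{2} \Big\rceil \leq h(G,2) \leq VC(G) \leq 2M(G).
\]
First I would invoke König's theorem: since $G$ is bipartite, $M(G) = VC(G)$. Substituting this identity into the rightmost part of the chain, the inequalities $M(G) \leq h(G,2) \leq VC(G) = M(G)$ force every term between $M(G)$ and $VC(G)$ to be equal. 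In particular $h(G,2)$ is squeezed between two equal quantities, yielding $h(G,2) = M(G) = VC(G)$.

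More explicitly, I would write $M(G) = VC(G)$ by König, and then note that the portion $M(G) \leq h(G,2) \leq VC(G)$ of the chain in Proposition~\ref{pro:ineq} immediately collapses to $M(G) = h(G,2) = VC(G)$. The polynomial-time computability then follows because a maximum matching (equivalently, by König, a minimum vertex cover) in a bipartite graph can be computed in polynomial time by standard augmenting-path or flow-based algorithms.

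I do not anticipate a genuine obstacle here, as the corollary is essentially a direct specialization of the already-proved proposition. The only point requiring a moment of care is to confirm that the relevant subchain $M(G) \leq h(G,2) \leq VC(G)$ is exactly the part of Proposition~\ref{pro:ineq} being used, and that König's equality $M(G)=VC(G)$ applies to $G$ itself (which is bipartite by hypothesis) rather than to $B_G$. Once these are identified, the squeeze is immediate and no further estimation or construction is needed.
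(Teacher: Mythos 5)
Your proof is correct and matches the paper's (implicit) argument exactly: the paper likewise derives the corollary by combining the chain $M(G)\le h(G,2)\le VC(G)$ from Proposition~\ref{pro:ineq} with König's theorem applied to the bipartite graph $G$. Nothing further is needed.
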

 
 We will now prove that computing $h(G,l)$ is NP-hard for $l = 3$.  Let us first recall this symmetry result from \cite{benameur2024complexityresultscopsrobber}.

\begin{lemma}
 If $W_1, W_2, ..., W_l$ is a winning strategy, then $W_{l}, W_{l-1}, ..., W_1$ is also a winning strategy.
 \label{lem:sym}
\end{lemma}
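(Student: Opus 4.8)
The plan is to reuse the walk-reversal trick that already appears in the proof of Lemma~\ref{nodeconaminationS}, leaning crucially on the fact that $G$ is undirected so that every edge may be traversed in either direction. First I would reformulate the winning condition purely in terms of rabbit walks. Since $W_1,\dots,W_l$ is a winning strategy identified with its first $l$ sets, we have $R_l=\emptyset$, and unwinding the recursion $R_1=V\setminus W_1$, $R_t=N(R_{t-1})\setminus W_t$ shows that this is equivalent to the non-existence of a \emph{surviving walk} of length $l$, namely a sequence $v_1,\dots,v_l$ with $v_t\notin W_t$ for every $t\in[l]$ and $v_tv_{t+1}\in E(G)$ for every $t\in[l-1]$. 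This is exactly the escape-walk characterization of winning strategies recalled in Section~\ref{sec:nota}, specialized to the fixed horizon $l$.

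Then I would argue by contradiction. Suppose the reversed sequence $W_l,W_{l-1},\dots,W_1$ is \emph{not} a winning strategy. By the reformulation above, there exists a surviving walk $u_1,\dots,u_l$ against it, i.e., $u_t\notin W_{l+1-t}$ for every $t\in[l]$ (since the $t$-th shot of the reversed strategy is $W_{l+1-t}$) and $u_tu_{t+1}\in E(G)$ for every $t\in[l-1]$. The key step is to reverse this walk: setting $v_t:=u_{l+1-t}$, the avoidance condition reads $v_t=u_{l+1-t}\notin W_{l+1-(l+1-t)}=W_t$, and the adjacency condition $v_tv_{t+1}=u_{l+1-t}u_{l-t}\in E(G)$ holds because $\{u_{l-t},u_{l+1-t}\}$ is a consecutive pair of the original walk and edges of $G$ are undirected. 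Hence $v_1,\dots,v_l$ is a surviving walk against $W_1,\dots,W_l$, contradicting the hypothesis that the latter is a winning strategy. Therefore $W_l,\dots,W_1$ must be winning.

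I do not expect a genuine obstacle here: the argument is a direct index reversal, and the only points demanding care are the index bookkeeping that identifies the $t$-th shot of the reversed strategy with $W_{l+1-t}$, and the explicit appeal to undirectedness needed to reverse each traversed edge. A mild subtlety worth flagging is that both strategies share the same finite length $l$, so the equivalence between ``the rabbit is caught by time $l$'' and ``no surviving walk of length $l$ exists'' should be invoked for this fixed horizon rather than for infinite strategies; this follows immediately from the definitions in Section~\ref{sec:nota}. Note also that, unlike Lemma~\ref{nodeconaminationS}, there is no restricted starting set here, so the reversed walk is automatically admissible and the argument is in fact simpler.
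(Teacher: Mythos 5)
Your proof is correct and follows essentially the same route as the paper: assume the reversed strategy fails, extract an escape walk against it, and reverse that walk (using undirectedness) to obtain an escape walk against the original strategy, contradicting its winning nature. The extra index bookkeeping and the fixed-horizon reformulation you include are sound but not needed beyond what the paper's two-line argument already uses.
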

\begin{proof}
Suppose,  for the sake of contradiction that,  $W_{l}, W_{l-1}, ..., W_1$ is not a winning strategy. Then there exists a rabbit escape strategy $r_1, ..., r_l$. Observe that the reverse walk  $r_l, r_{l-1}, ..., r_1$ allows the rabbit to escape against $W_1, ..., W_l$, thus contradicting the winning nature of the strategy.
\end{proof} 
 
 \begin{proposition}\label{2to3}
It is NP-hard to compute $h(G,3)$ for simple graphs $G$.
 \end{proposition}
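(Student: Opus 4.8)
The plan is to give a polynomial-time reduction from the computation of $h(G,2)$, which is NP-hard by Proposition~\ref{prop:hG2}. The subtlety to keep in mind is that $h(\cdot,l)$ is non-increasing in $l$, so we cannot reuse the same graph: an extra round can only help the hunters and $h(G,3)\le h(G,2)$, whence any naive construction collapses. Recalling that $h(G,2)=\min_{S\subseteq V}\max(|S|,|N(\overline{S})|)$, the objective is to build from $G$ (and a target value $k$, as was done with $H$ in Proposition~\ref{pro:loop}) a graph $G'$ in which exactly one of the three rounds is provably \emph{wasted}, so that the two remaining rounds are forced to realise an optimal two-shot strategy on a copy of $G$. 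I aim to prove a threshold equivalence of the form: $h(G,2)\le k$ if and only if $h(G',3)\le k$ up to a fixed, explicitly computable additive offset. Since $h(G,2)\le n$, it then suffices to test the relevant polynomially many values of $k$.

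To force a wasted round I would reuse the trap-clique device already exploited in Propositions~\ref{P:hard} and~\ref{pro:loop}: attach to the copy of $G$ a clique $U$ of twin vertices whose size exceeds the hunter budget $k$, connected so that if the rabbit ever reaches $U$ within the time horizon it can no longer be shot in the remaining rounds. A large \emph{source} set, each of whose vertices is adjacent to every vertex of the $G$-copy, is placed so that after a forced first shot the rabbit territory inside the $G$-copy is the whole vertex set of $G$; the second and third shots are then constrained to play exactly $\min_{S}\max(|S|,|N(\overline{S})|)=h(G,2)$. In the easy direction I would lift an optimal two-shot strategy $S^{\star},\,N(\overline{S^{\star}})$ of $G$ to a three-shot strategy of $G'$ by prefixing the forced gadget move, and check directly that every rabbit walk is caught, giving the upper bound on $h(G',3)$.

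The converse is where the real work lies, and it is also the main obstacle. First, one must show that every three-shot winning strategy on $G'$ is forced into this canonical shape: using the twin/trap argument (exactly as in Proposition~\ref{P:hard}, where reaching $U$ is fatal and hence certain sets must be shielded at each round) together with the reversal symmetry of Lemma~\ref{lem:sym}, the hunters have no freedom in the gadget round, so their two non-gadget shots must on their own decontaminate the $G$-copy; projecting these two shots then yields a feasible $S$ with $\max(|S|,|N(\overline{S})|)\le k$, i.e.\ $h(G,2)\le k$. The delicate point is to rule out \emph{spurious walks} that bounce back and forth through the gadget, namely from the $G$-copy into the source set and back: since the graph is undirected, the very edges that repopulate the $G$-copy also let a rabbit return into the gadget, which could inflate the third shot beyond $k$ and break the correspondence. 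I expect the crux to be calibrating the gadget's parameters (the size of $U$, the size of the source set, and which rounds shield $U$) so that any such returning walk is forced to pass through $U$ and is therefore already excluded by the trap argument, thereby keeping the two non-gadget rounds faithful to the two-shot game on $G$.
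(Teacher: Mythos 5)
Your high-level plan --- reduce from $h(G,2)$ via a gadget that consumes one of the three rounds --- is the same as the paper's, but what you have written is a proof outline whose two hardest steps are exactly the ones left open, and the gadget you propose makes them harder than they need to be. The paper takes $G'$ to be simply the \emph{disjoint union} of $G$ and a clique $K_k$, and shows $h(G,2)\le k$ iff $h(G',3)\le k$. Because the gadget is disconnected from $G$, the ``spurious walks that bounce back and forth through the gadget'' that you correctly identify as the crux of your construction simply do not exist: the only interaction between the two components is the shared per-round budget of $k$ shots. Your connected variant (a trap clique $U$ plus a source set joined to all of $V(G)$) forces you to shield the source set in the last round(s) to stop the rabbit re-entering it from the $G$-copy, which is precisely the budget inflation you flag; you say you ``expect'' the parameters can be calibrated to fix this, but you give no calibration and no argument, so the reduction is not established.

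There is a second, independent gap: your converse assumes every $k$-hunter winning $3$-round strategy on $G'$ is \emph{canonical}, i.e.\ wastes one full round on the gadget and plays a faithful $2$-round strategy on the $G$-copy in the other two. Even for the paper's clean disjoint gadget this is false. A minimal strategy may instead put $k-1$ clique shots in two consecutive rounds (with the third round empty on the clique), or $k-1,\,k-2,\,k-1$ across the three rounds; in those cases only a handful of shots ever land on $G$, and one cannot extract a $2$-round strategy by projection. The paper handles these cases by a separate structural argument: so few shots on $G$ force $G\setminus W'_1$ to be essentially a star, or force the matching number of $G$ to be at most $2$, whence $h(G,2)\le VC(G)\le 2M(G)\le 4\le k$ directly (which is also why one must assume $k\ge 4$, justified by the brute-force Observation for constant $k,l$). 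Your proposal contains no mechanism for ruling out or absorbing such non-canonical strategies, so even with a working gadget the backward direction would be incomplete.
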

 \begin{proof}
 We will reduce from the problem of deciding whether $h(G,2)\le k$ for given $G,k$. By Observation \ref{polyconstantkl}, we can assume in the whole proof that $k \geq 4$. Consider the graph $G'$ obtained as the disjoint union of $G$ and a clique $K_k$ on $k$ vertices. We claim that $h(G,2)\le k$ if and only if $h(G',3)\le k$.\\
 On one hand if $W_1,W_2$ is a winning strategy on $G$ with at most $k$ hunters, then $W_1,W_2,V(K_k)$ is a winning strategy on $G'$ with $k$ hunters.
 \\ Now suppose $W'_1,W'_2,W'_3$ is a winning strategy on $G'$ with at most $k$ hunters that minimizes the total number of shots $|W'_1|+|W'_2|+|W'_3|$ and let $C_i=W'_i\cap V(K_k)$ for $i=1,2,3$.\\ \ \\
{\textbf{Claim.}}
 Only three cases are possible. {\bf Case 1}: there exists a $C_i$ of size $k$; {\bf Case 2}: there are two consecutive $C_i$ of size $k-1$ and the remaining $C_j$ is empty; {\bf Case 3}: $|C_1|=|C_3|=k-1$ and $|C_2|=k-2$.\\ \ \\
{\textit{Proof.}} We will assume that no $C_i$ has size $k$ and derive the other two cases. First observe that if at some step $j$ the rabbit territory within the clique contains two distinct vertices, then all shots on the clique up to time step $j$ are useless, since the two vertices dominate the entire clique. In particular, given the minimality (in terms of total shots) of the hunter strategy, $C_1$ is either empty or contains exactly $k-1$ vertices:
 if $C_1$ is empty, then, for the same reason, $C_2$ can either be empty or contain exactly $k-1$ vertices, but if $C_2=\emptyset$, then $|C_3|=k$, which we assumed is not the case, so necessarily $|C_2|=k-1$ and therefore, $|C_3|=k-1$ (since $N(R_2)\cap K_k$ would have $k-1$ vertices). Therefore, if $C_1$ is empty we are in Case 2.\\
 If $|C_1|=k-1$, then either $|C_2|=k-1$ and, by minimality $C_3=\emptyset$ and we are in Case 2, or $|C_2|<k-1$, but the only possibility is $|C_2|=k-2$, otherwise $|R_2\cap K_k| \ge 2$
and the initial shots are wasted. Thus $|C_1|=k-1$ and $|C_2|=k-2$, implying $R_2$ has exactly one vertex in the clique and thus $|C_3|=k-1$ and we are in Case 3. \qed \\ \ \\
{\bf Case 1}: if $|C_1|=k$, then $W'_2\cap V(G), W'_3\cap V(G)$ is a winning strategy on $G$ with at most $k$ hunters, symmetrically, if $|C_3|=k$, then $W'_1\cap V(G), W'_2\cap V(G)$ is a winning strategy on $G$ with at most $k$ hunters. Assume $|C_2|=k$, then $W_2\cap V(G) =\emptyset$ implying $R_2\cap V(G) =N(R_1)\cap V(G)$  and thus $R_1\cap V(G)\subseteq N(R_2)\cap V(G)$, from which we have $|V(G)|-k\le |R_1\cap V(G)|\le |N(R_2)\cap V(G)|\le k$. Now, either $|R_2\cap V(G)|\le k$ and $W_1\cap V(G), R_2\cap V(G)$ is a winning strategy on $G$ with at most $k$ hunters, or $|R_2\cap V(G)|>k\ge |V(G)|-k$ and $N(R_2)\cap V(G),V(G)\setminus R_2$ is a winning strategy on $G$ with at most $k$ hunters. This can be easily seen by observing that there is no edge joining vertices of $V(G)\setminus N(R_2)$ with vertices of $R_2$. \\
{\bf Case 2}: By Lemma \ref{lem:sym}, if $W'_1,W'_2,W'_3$ is a hunter winning strategy, then $W'_3,W'_2,W'_1$ is winning too. Therefore, without loss of generality, we can suppose $|C_2|=|C_3|=k-1$, which means $W'_2$ and $W'_3$ have at most one vertex of $G$. Let $J$ be the graph induced by $R_1\cap V(G)=V(G)\setminus W_1'$ and let $S$ be the subgraph obtained by deleting from $J$ the isolated vertices. Observe that $V(S)=N(R_1)\cap V(J)$. Since $W'_2$ and $W'_3$ have at most one vertex of $G$,  we have $h(S,2)\le 1$, which is possible only if $S$ (and thus $J$) does not contain two disjoint edges nor a triangle. We can assume $J$ contains at least one edge (otherwise clearly $h(G,2)\le k$), thus $J$ is a star ($S$) plus possibly some isolated vertex. If $J$ contains more than one edge, let $c$ be the center of the star $S$ and consider a vertex $v\in W'_1\cap V(G)$. If $v$ has a neighbor $w$ in $J$, then $w=c$, otherwise, since $c$ is adjacent in $J$ to some $z\neq w$, we would have $v,z,c\in N(R_1)\cap V(G)$, and so two among $w,z,c$ belong to $N(R_2)\cap V(G)$, which is not possible. If $J$ has only one edge $cz$, then by the same argument we see that no vertex of $W'_1\cap V(G)$ can be adjacent to any $w\neq z,c$. Moreover if we have $v_1,v_2 \in W'_1\cap V(G)$ with $v_1$ adjacent to $c$ and $v_2$ adjacent to $z$, then $v_1,v_2,z,c\in N(R_1)\cap V(G)$, and so at least two among $v_1,v_2,z,c$ belong to $N(R_2)\cap V(G)$, which is not possible. Thus all vertices in $W'_1\cap V(G)$ that are adjacent to $J$, are adjacent to the same edge endpoint, we can assume it is $c$ (by possibly renaming the edge endpoints). In summary, regardless of how many edges $J$ has, all vertices of $W'_1\cap V(G)$ adjacent to $J$ are adjacent to the same vertex $c$. Now take $v\in W'_1\cap V(G)$ (at least one such vertex exists, otherwise trivially $h(G,2)\le k$), define $W_1=((W'_1\cap V(G))\cup \lbrace c \rbrace ) \setminus \lbrace v \rbrace $. By the previous arguments we can see that $V(G)\setminus W_1$ is an independent set, thus $W_1,W_1$ is a winning strategy with less than $k$ hunters in $G$. \\
{\bf Case 3}: In this case the total number of hunter shots on $V(G)$ is $4$. This means that the matching number of $G$ is at most $2$. To see this, observe that if there is a matching of size $3$, then at least six shots are required: two shots per disjoint edge. 
Using Proposition \ref{matchingvc}, we obtain that $h(G,2)\le 4$ and since we assumed $k \geq 4$,  $h(G,2)\le k$ holds.  \\
This completes our proof.
 \end{proof}

 Let $l,k$ be integers with $k>2$. We now describe  a polynomial reduction from the calculation of hunting number with time limit $l$ to the calculation of hunting number with time limit $l+2$. Consider a graph $T$ obtained by fully connecting a clique $K_k$ with an independent set $I$ on $k+1$ vertices. Let $G''$ be the graph obtained as the disjoint union of $G$ and $T$.
 \begin{lemma}\label{lplus2}
  Let $k,l,G,G''$ be as above. We have $h(G,l)\le k$ if and only if $h(G'',l+2)\le k$
 \end{lemma}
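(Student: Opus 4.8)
The plan is to establish the equivalence $h(G,l)\le k \iff h(G'',l+2)\le k$ by analyzing how hunters must deal with the attached gadget $T$, which behaves as a ``time buffer'' costing exactly two extra rounds. First I would prove the easy forward direction: given a winning strategy $W_1,\ldots,W_l$ on $G$ using at most $k$ hunters, I would construct a winning strategy on $G''$ of length $l+2$. The idea is to spend two rounds neutralizing the gadget $T$ and $l$ rounds on the copy of $G$, interleaving appropriately. Since $T$ consists of a clique $K_k$ joined to an independent set $I$ of size $k+1$, a rabbit confined to $T$ can be shot in a bounded number of rounds using $k$ hunters (the clique forces the rabbit into $I$, and $I$ being independent means a single well-timed shot pattern clears it); I would verify that exactly two rounds of the $l+2$ suffice to handle $T$ while the other $l$ rounds, restricted to $V(G)$, replay the given strategy. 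Care is needed to synchronize parities so that the $G$-component strategy and the $T$-component strategy can run concurrently without exceeding $k$ hunters in any single round.

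The harder direction is the converse: assuming $h(G'',l+2)\le k$, I would extract a winning strategy on $G$ of length $l$ using at most $k$ hunters. The key structural claim, analogous to the \textbf{Claim} in the proof of Proposition~\ref{2to3}, is that any winning strategy on $G''$ must ``waste'' a certain amount of effort on $T$, and in particular must devote at least two of its $l+2$ rounds substantially to the gadget. I would argue that because $I$ has $k+1$ vertices and only $k$ hunters are available, the gadget cannot be fully shot in a single round, and the clique structure of $K_k$ fully connected to $I$ propagates rabbit territory in a way that forces at least two ``gadget-dominated'' rounds. Restricting the strategy to the first and last rounds (or to the rounds not consumed by $T$) and projecting onto $V(G)$ should yield a valid winning strategy on $G$ of length $l$.

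The main obstacle I anticipate is establishing precisely that the gadget $T$ imposes a delay of \emph{exactly} two rounds — neither fewer (which would let the reduction collapse) nor more (which would break the length bound $l+2$). This requires a careful case analysis of how a $k$-hunter strategy can interact with $K_k\nabla I$, tracking the rabbit territory $R_t$ restricted to $V(T)$ across consecutive rounds, much as the original proof tracks $C_i=W_i'\cap V(K_k)$. I would likely invoke Lemma~\ref{lem:sym} to assume without loss of generality that the two gadget-heavy rounds occur at the beginning (or symmetrically at the two ends), and I would use the fact $k>2$ to rule out degenerate small cases, exactly as Proposition~\ref{2to3} assumes $k\ge 4$. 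The delicate point is showing that during the $l$ ``inner'' rounds the hunters truly have a full budget of $k$ shots available on $V(G)$, so that the projected strategy is genuinely a winning $l$-round strategy for $G$; any leakage of rabbit territory between $V(G)$ and $V(T)$ is impossible since $G''$ is a \emph{disjoint} union, which is what makes the projection clean.
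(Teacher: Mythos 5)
Your forward direction is essentially right in spirit but slightly off in execution: with only $k$ hunters you cannot run the $T$-strategy and the $G$-strategy \emph{concurrently}, because clearing the gadget requires shooting all of $V(K_k)$ (all $k$ hunters) in its rounds. The paper's construction is simply sequential: $W_1,\ldots,W_l,V(K_k),V(K_k)$. Shooting the clique twice suffices since any rabbit surviving in the independent set $I$ must move into $K_k$ at the next step; no shot on $I$ is ever needed. Disjointness makes this immediate, so no parity synchronization is required here.

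The converse contains a genuine gap. You assert that the gadget forces ``at least two gadget-dominated rounds'' and that one can then restrict to the remaining rounds and project onto $V(G)$. Two things are missing. First, the case analysis in the paper shows that the shots on $T$ occupy a block of $r+1$ consecutive rounds with $r\ge 1$ \emph{odd}: the first and last of these rounds use all $k$ hunters on $K_k$, while each intermediate round uses $k-1$ hunters on $K_k$ and leaves exactly one hunter free for $V(G)$. So the gadget may consume four, six, \dots rounds, with partial budgets in between; your plan does not account for how those stray single shots on $V(G)$ are to be retained when you compress to $l$ rounds (the paper merges $W''_{j+1}$ and $W''_{j+3}$ into a single round for this reason). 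Second, and more fundamentally, ``deleting the gadget rounds and projecting'' is not automatically sound: removing rounds shifts the timing of every subsequent shot relative to the rabbit's walk, since $R_t=N(R_{t-1})\setminus W_t$. The paper's correctness argument is not a direct verification of the compressed strategy but a contrapositive via a \emph{stuttering walk}: a rabbit walk $r_1,\ldots,r_l$ escaping the compressed $l$-round strategy is expanded to $r_1,\ldots,r_{j-1},r_j,r_{j+1},r_j,r_{j+1},r_{j+2},\ldots,r_l$, which oscillates along the edge $r_jr_{j+1}$ (possible because the graph is undirected) and escapes the original $(l+2)$-round strategy on $G''$, a contradiction. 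This oscillation trick is the crux of the lemma and also explains why the reduction must add exactly two rounds (to preserve parity); without it, or an equivalent timing argument, the converse is not established. Your appeal to Lemma~\ref{lem:sym} and to the analogy with Proposition~\ref{2to3} points in a reasonable direction, but neither supplies this step.
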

 \begin{proof}
 Let $W_1,...,W_l$ be a hunter winning strategy  on $G$ with at mos $k$ hunters, then $W_1,...,W_l,V(K_k),V(K_k)$ is clearly   a hunter winning strategy on $G''$, also using  at most $k$ hunters.\\
 Conversely let $W''_1,...,W''_{l+2}$ be a hunter winning strategy on $G''$ that minimizes the total number of shots.\\
 {\bf Observation ($\ast$)} If some $R_i$ contains two vertices of $K_k$ or a vertex of $K_k$ and a vertex of $I$, then $N(R_i)\supseteq V(T)$.\\
 Let $j$ be the first time step at which $W''_j$ contains vertices of $T$, by ($\ast$) and the minimality of the strategy, we have $W''_j=V(K_k)$. Moreover, $W''_{j+1}$ has at least $k-1$ vertices of $K_k$, because $N(R_j)\cap V(T)= V(K_k)$. It follows that either $W''_{j+1}=V(K_k)$ and $W''_i\cap V(T)=\emptyset$ for $i> j+1$, or $W''_{j+1}$ contains exactly $k-1$ vertices of $K_k$, namely all vertices of $K_k$ except one that we denote by $v$. Now $N(R_{j+1})\supseteq V(T)\setminus \lbrace v \rbrace$. By the fact that $I$ has at least two vertices, combined with ($\ast$) and the minimality of the strategy, we get that $W''_{j+2}$ must contain exactly $k-1$ vertices of $K_k$, namely all of them except $v$ (recall $v\notin N(R_{j+1})$). Now, $N(R_{j+2})\cap V(T)=V(K_k)$, therefore we are in the same conditions of step $j+1$ and we can apply the same arguments until we get to a step $j+d$, with $d\ge 1$ odd, when $T$ is decontaminated and $W''_i\cap V(T)=\emptyset$ for $i> j+d$.\\
 In summary the shots on $T$ are exactly at steps $j,j+1,...,j+r$ with $r\ge 1$ odd, $W_{j}''=W''_{j+r}=V(K_k)$ and between steps $j$ and $j+r$ there is an even (possibly $0$) number of steps where all vertices of $K_k$ except $v$ are shot.\\ 
 Now we define a hunter strategy on $G$ as follows: $W_i=W''_i$ for $1\le i\le j-1$, $W_j=W''_{j+2}\cap V(G)$, $W_{j+1}=(W''_{j+1}\cup W''_{j+3})\cap V(G)$, $W_i=W''_{i+2}$ for $j+2\le i \le l$. Observe that this strategy uses at most $k$ hunters since, when $r=1$, $|W''_{j+1}\cap V(G)|=0$; when $r\ge 3$, $|W''_{j+1}\cap V(G)|=1$ and  $|W''_{j+3}\cap V(G)|\le 1 < k-1$. Supposed $W_1,...,W_l$ is not winning on $G$, then there is a rabbit walk escaping it: $r_1,...,r_l$. Notice that $r_j\notin W_{j}''\cup W_{j+2}''$ because $W_{j}''=V(K_k)$ and $W_{j+2}''\cap V(G)=W_j$, moreover $r_{j+1} \notin (W''_{j+1}\cup W''_{j+3})$ by definition of $W_{j+1}$. It follows that the walk $r_1,...,r_{j-1},r_j,r_{j+1},r_j,r_{j+1},r_{j+2},r_{j+3},...,r_l$ is escaping from $W_1'',..., W_{l+2}''$ in $G''$, which is a contradiction. Therefore we can conclude $W_1,...,W_l$ is a hunter winning strategy on $G$.
 \end{proof}
 Combining the previous lemmas, we can get the main result of this section.
 \begin{theorem}
 For a given graph $G$ and positive integers $l,k$ (with $l$  polynomial in $|V(G)|$), deciding whether $h(G,l)\le k$ is NP-complete.
 \end{theorem}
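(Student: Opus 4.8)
The plan is to establish NP-completeness in two parts: membership in NP, and NP-hardness obtained by reducing from the already-settled base cases $l=2$ and $l=3$ while padding the time limit upward in increments of two. For membership I would use a candidate strategy $W_1,\dots,W_l$ with each $|W_t|\le k$ as a certificate. Since $l$ is bounded by a polynomial in $n=|V(G)|$ and one may assume $k\le n$ (instances with $k\ge n$ are trivially yes-instances), this certificate has polynomial size; it is verified by computing the rabbit territories $R_1=V\setminus W_1$ and $R_t=N(R_{t-1})\setminus W_t$ for $t\le l$ and checking whether $R_l=\emptyset$, which certifies capture no later than $l$. Hence the problem is in NP.

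For hardness I would exploit the base results directly. Proposition \ref{prop:hG2} gives NP-hardness of deciding $h(G,2)\le k$ and Proposition \ref{2to3} gives it for $h(G,3)\le k$. By Observation \ref{polyconstantkl}, for a fixed constant time limit the instances with $k\le 2$ are solvable in polynomial time, so all the hardness of the two base problems is concentrated on instances with $k\ge 3$; I therefore restrict to $k\ge 3$, which is precisely the hypothesis $k>2$ needed to invoke Lemma \ref{lplus2}. Given a target time limit $l$, I set the base limit $l_0=2$ when $l$ is even and $l_0=3$ when $l$ is odd (so that $l-l_0$ is even), and I build $G^\star$ as the disjoint union of the base graph $G$ with $(l-l_0)/2$ independent copies of the gadget $T$ (a clique $K_k$ fully joined to an independent set on $k+1$ vertices). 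Since each application of Lemma \ref{lplus2} leaves $k$ unchanged, a straightforward induction yields $h(G,l_0)\le k$ if and only if $h(G^\star,l)\le k$, so the target problem is NP-hard.

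It remains to confirm that this reduction runs in polynomial time, and this is exactly where the polynomial bound on $l$ is used: the number of added gadget copies is $(l-l_0)/2=O(l)$ and each copy contributes $2k+1=O(n)$ vertices, so $|V(G^\star)|$ is polynomial in the input size, and in fact $l=O(|V(G^\star)|)$ so that $l$ stays polynomially bounded relative to the produced instance. The parity split covers every $l\ge 2$, which together with membership establishes NP-completeness. The delicate point I expect to require care is the bookkeeping around the parameter $k$: one must check both that restricting to $k\ge 3$ preserves the hardness of the base problems and that this hypothesis $k>2$ persists through the entire chain of applications of Lemma \ref{lplus2}, since the same $k$ is shared by all steps.
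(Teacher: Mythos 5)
Your proposal is correct and follows essentially the same route as the paper: NP membership via the polynomial-size strategy certificate (using the polynomial bound on $l$), and NP-hardness by reducing from the $l=2$ or $l=3$ base cases through $(l-l_0)/2$ iterated applications of Lemma~\ref{lplus2}. Your explicit bookkeeping that Observation~\ref{polyconstantkl} lets one restrict the base instances to $k\ge 3$ (so the hypothesis $k>2$ of Lemma~\ref{lplus2} holds throughout the chain) is a point the paper leaves implicit, but it is handled correctly.
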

 \begin{proof}
 If $l$ is even, one can get a reduction from the problem of deciding whether $h(G,2)\le k$ by $\frac{l-2}{2}$ applications of the construction in Lemma \ref{lplus2}. Similarly, if $l$ is odd, one can get a reduction from the problem of deciding whether $h(G,3)\le k$ by $\frac{l-3}{2}$ applications of the construction in Lemma \ref{lplus2}. Notice that the constructions arising from $O(l)$ applications of Lemma \ref{lplus2} still have polynomial size. Therefore, combining Propositions \ref{prop:hG2} and \ref{2to3} with Lemma \ref{lplus2} we obtain NP-hardness of deciding whether $h(G,l)\le k$.  \\
 Finally, to see that the problem belongs to NP, observe that by our assumption on $l$, one can verify in polynomial time whether a given strategy $W_1,...,W_l$ is hunter-winning. 
 \end{proof}
 
 Let us now present a simple $l$-factor approximation algorithm. As done in \cite{BENAMEUR2022}, given a graph $G=(V,E)$, we build a layered directed graph $L(G,l)$ defined as follows.   $L(G,l)$ has $l$ layers, where each layer contains a copy of all vertices of $G$, in addition to a source $s$ connected to all the vertices of the first layer, and a sink $t$ connected from all the vertices of the last layer. For a vertex $v \in V$, its copy belonging to the i-th level is denoted $v^i$. 
 For each edge $uv \in E$  and each $i \in [l-1]$, $L(G,l)$ contains the arcs  $(u^i,v^{i+1})$ and  $(v^i,u^{i+1})$. 
  More formally, the layered graph $L(G,l)$ has vertex set $\{v^i : v \in V, \ i \in [l]\}\cup\{s,t\}$  and arc set $\{(u^i,v^{i+1}), (v^i,u^{i+1}) \mbox{ for all }  uv \in E, \ i \in [l-1]\} \cup \{(s,v^1),(v^l,t) \mbox{ for all } v \in V\}$.  Observe that a walk of length $l$ in $G$ corresponds to an $s-t$ directed path in $L(G,l)$. Considering $l$ layers in $L(G,l)$ is equivalent to imposing a capture time  less than or equal to $l$.   Let $ca(G,l)$ be the minimum size of a $s-t$ vertex cut. $ca(G,l)$ can obviously be computed in polynomial time and is equal to the maximum number of internally vertex-disjoint $s-t$ paths. We show below that $ca(G,l)$ is a $\min(l,n)$-approximation of $h(G,l)$.   
 
 \begin{proposition}
 $h(G,l) \leq ca(G,l) \leq \min(l,n) \times h(G,l)$. 
 \end{proposition}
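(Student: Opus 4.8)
The plan is to establish an exact correspondence between winning strategies for the time-limited game and $s$--$t$ vertex cuts in $L(G,l)$, and then to read off both inequalities from this correspondence together with two easy size estimates. Concretely, to any family $W_1,\dots,W_l$ of shot-sets I associate the vertex set $C(W)=\{v^i : v\in W_i,\ i\in[l]\}$ of $L(G,l)$, and conversely to any set $C$ of internal vertices I associate the family $W_i=\{v\in V: v^i\in C\}$. The central claim, which I would prove first, is that $W_1,\dots,W_l$ catches the rabbit by time $l$ if and only if $C(W)$ is an $s$--$t$ cut. This is immediate from the observation already recorded in the text that a length-$l$ walk $r_1,\dots,r_l$ in $G$ corresponds to the $s$--$t$ path $s,r_1^1,\dots,r_l^l,t$: an escape walk (one with $r_i\notin W_i$ for all $i$ and $r_ir_{i+1}\in E$) is precisely an $s$--$t$ path avoiding $C(W)$, so the strategy fails exactly when $C(W)$ is not a cut.

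For the lower bound $h(G,l)\le ca(G,l)$, I would take a minimum cut $C$, so $|C|=ca(G,l)$, and use the induced strategy $W_i=\{v: v^i\in C\}$, which is winning by the correspondence. The number of hunters this strategy uses is $\max_i|W_i|$, and since the sets $W_i$ are the layer-slices of $C$ we have $\sum_i|W_i|=|C|$, hence $\max_i|W_i|\le |C|=ca(G,l)$; therefore $h(G,l)\le ca(G,l)$.

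For the upper bound I treat the two parts of $\min(l,n)$ separately. For the factor $l$, start from an optimal winning strategy $W_1,\dots,W_l$ with $\max_i|W_i|=h(G,l)$ (padding with empty shot-sets if it terminates before round $l$, which keeps the rabbit territory empty and does not increase the number of hunters). By the correspondence $C(W)$ is a cut, so $ca(G,l)\le |C(W)|=\sum_i|W_i|\le l\cdot h(G,l)$. For the factor $n$, I observe that the whole first layer $\{v^1: v\in V\}$ is itself an $s$--$t$ cut, because every out-arc of $s$ enters layer $1$; hence $ca(G,l)\le n$. Since $G$ has no isolated vertices we have $h(G,l)\ge 1$, so $ca(G,l)\le n\le n\cdot h(G,l)$. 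Combining the two parts yields $ca(G,l)\le \min(l,n)\cdot h(G,l)$.

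I do not expect a genuine obstacle here; the only point requiring care is keeping the two notions of size apart — a strategy is measured by the \emph{maximum} number of simultaneous shots $\max_i|W_i|$, whereas a cut is measured by its \emph{total} size $\sum_i|W_i|$ — which is exactly what creates the gap of a factor $l$ between $h(G,l)$ and $ca(G,l)$. A secondary check is that a minimum cut can be assumed to avoid $s$ and $t$, so that the layer-slicing of $C$ and the equality $\sum_i|W_i|=|C|$ are well defined.
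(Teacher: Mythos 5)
Your proposal is correct and follows essentially the same route as the paper: the paper's phrase ``$ca(G,l)$ is the minimum number of shots needed to capture the rabbit'' is exactly your cut--strategy correspondence, its ``each hunter will only shoot once'' is your $\max_i|W_i|\leq\sum_i|W_i|$, and its ``each hunter cannot shoot more than $l$ times'' is your $\sum_i|W_i|\leq l\cdot\max_i|W_i|$, with the same trivial bound $ca(G,l)\leq n$ for the other factor. Your write-up merely makes the correspondence and the distinction between maximum and total shot counts explicit, which the paper leaves informal.
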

 \begin{proof}
 Since $ca(G,l)$ is the minimum size of a $s-t$ vertex-cut,  it can be seen as the minimum number of shots needed to capture the rabbit, regardless of the number of hunters. The value $ca(G,l)$ is then an upper bound of $h(G,l)$ (in the worst case, each hunter will only shoot once).   Furthermore, in an optimal winning strategy using $h(G,l)$ hunters, each hunter cannot shoot more than $l$ times, thus proving that $ca(G,l) \leq l \times h(G,l)$. Finally, $ca(G,l) \leq n$, implying that $ca(G,l) \leq n \times h(G,l)$.
 \end{proof}

\section{Characterization of $1$-hunterwin graphs with loops}

\begin{figure}[htbp]
    \centering
    \vspace{-5mm}
    \includegraphics[scale=0.3]{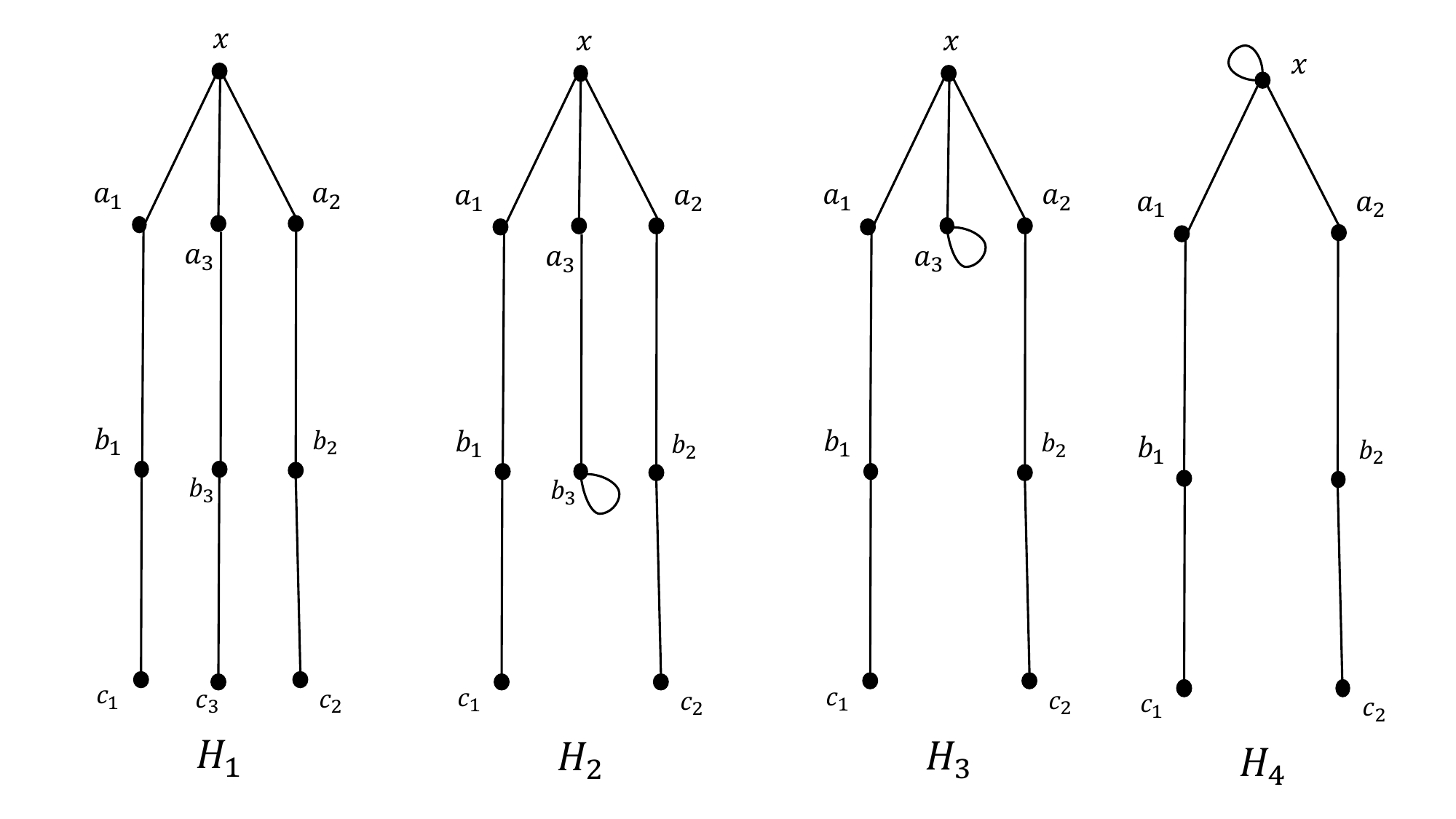}
    \caption{The $4$ forbidden graphs $H_1$, $H_2$, $H_3$ and $H_4$}
    \label{fig:forbid}
\end{figure}

We already know from \cite{benameur2024complexityresultscopsrobber}  that it is difficult to characterize directed graphs for which $h(G) = 1$. However,  a characterization is known for simple graphs \cite{journals/combinatorics/BritnellW13,HASLEGRAVE20141}.  We would like to go further by considering undirected graphs  that  might contain loops, and we aim to characterize those for which $h(G) = 1$. 
It is possible to linearly characterize such graphs using the characterization in \cite{journals/combinatorics/BritnellW13,HASLEGRAVE20141} and Lemma \ref{lem:bipartite}. Nonetheless, we provide a characterization by means of forbidden subgraphs extending the one in \cite{journals/combinatorics/BritnellW13,HASLEGRAVE20141} for loopless graphs. We will prove that  cycles, connected loops and the $4$ graphs $H_1$, $H_2$, $H_3$ and $H_4$ shown on Figure \ref{fig:forbid} are precisely the forbidden subgraphs in a 1-hunterwin graph with loops.  Notice that $H_1$ is the $3$-spider graph used in \cite{journals/combinatorics/BritnellW13,HASLEGRAVE20141} to characterize 1-hunterwin acyclic simple graphs. 

\begin{theorem}
A graph $G$ is $1$-hunterwin if and only if it does not contain cycles,  two connected loops, 
{ $H_1$,  $H_2$,  $H_3$ or $H_4$.}
\label{th:1win}
\end{theorem}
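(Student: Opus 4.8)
The plan is to prove both directions of the characterization, where the forbidden structures are cycles, two connected loops, and the four graphs $H_1, H_2, H_3, H_4$. The necessity direction (if $G$ is $1$-hunterwin then it avoids all forbidden subgraphs) should be the easier one: I would argue the contrapositive, showing that each forbidden subgraph individually forces $h(G) \geq 2$. A cycle or two connected loops creates a closed walk on which the rabbit can circulate indefinitely against a single hunter, and the graphs $H_1$ (the $3$-spider) through $H_4$ each admit an explicit escape walk against any single-hunter strategy. Here I would lean on the fact that the escape-walk characterization of non-winning strategies (stated in Section~\ref{sec:nota}) lets me exhibit a walk $(v_t)_{t\ge 1}$ with $v_t \notin W_t$ and $v_tv_{t+1}\in E(G)$; since a single hunter shoots only one vertex per round, the branching available in a spider or in the loop-augmented graphs gives the rabbit enough freedom to always sidestep.

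For the sufficiency direction (avoiding all forbidden subgraphs implies $1$-hunterwin) the cleanest route is to reduce to the known loopless characterization via Lemma~\ref{lem:bipartite}, which states $h(G)=h(B_G)$. First I would observe that if $G$ avoids cycles and two connected loops, then $G$ consists of a forest together with at most isolated loops or loops attached in a restricted way; more precisely, the loop structure must be very sparse. The key translation is to understand which subgraphs of $B_G$ correspond to the forbidden subgraphs of $G$: a cycle in $G$ yields a cycle in $B_G$, a loop $vv$ becomes an edge $v'v$, and two connected loops produce a short cycle or a forbidden spider in $B_G$. The goal is to show that $G$ avoiding my list is equivalent to $B_G$ avoiding the $3$-spider $H_1$ and cycles, so that the Britnell--Wood / Haslegrave characterization of loopless $1$-hunterwin graphs applies directly to $B_G$ and gives $h(B_G)=1$, hence $h(G)=1$.

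The main obstacle I anticipate is the bookkeeping in this equivalence: I must verify that each of the extra forbidden graphs $H_2, H_3, H_4$ (which involve loops) maps precisely to a forbidden configuration in $B_G$, and conversely that every forbidden configuration in $B_G$ pulls back to one of my listed forbidden subgraphs in $G$. This is where the loops complicate matters, because a single loop in $G$ becomes a pendant-type edge in $B_G$ and interacts subtly with the tree structure. I expect that $H_2, H_3, H_4$ are exactly the minimal loop-decorated structures whose image under $G \mapsto B_G$ contains an $H_1$ or a cycle, so the correspondence should be tight, but proving minimality (that no proper subgraph of each $H_i$ already forces $h\ge 2$) requires care. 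I would handle this by a careful case analysis on how loops can attach to a path or near a branch vertex, checking each of the finitely many local configurations.

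An alternative to the $B_G$-reduction, which may be more transparent for the loop cases, is a direct argument: assuming $G$ avoids the forbidden list, I would describe an explicit monotone sweeping strategy for one hunter generalizing the path-sweeping example in the Introduction, and prove it wins. The hardest part of a direct approach would be establishing monotonicity (that a decontaminated vertex stays clean) in the presence of the permitted isolated loops, since a loop lets the rabbit stay put for one round and thereby threatens to recontaminate. I would resolve this by showing that the absence of cycles and of two connected loops forces any loop to sit at a ``safe'' position where the sweep can shoot it an extra time without losing ground, so I lean toward presenting the $B_G$-reduction as the main proof and using the direct sweep only to supply intuition.
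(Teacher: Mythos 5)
Your plan follows essentially the same route as the paper: reduce via Lemma~\ref{lem:bipartite} to the loopless graph $B_G$, show that $G$ contains one of the listed forbidden subgraphs if and only if $B_G$ contains a cycle or a $3$-spider, and then invoke the known characterization of loopless $1$-hunterwin graphs. The only deviation --- handling necessity by explicit escape walks in $H_1,\dots,H_4$ rather than through the forward direction of the $B_G$ correspondence --- is an optional detour, and the bookkeeping you flag as the main obstacle (paths in $B_G$ visiting both copies $v,v'$ of a vertex, which must be pulled back to cycles or connected loops in $G$) is precisely the technical core of the paper's own case analysis.
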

\begin{proof}
First of all let us apply the construction of Lemma \ref{lem:bipartite} and obtain the graph $B_G$. We will prove that $G$ contains a forbidden subgraph among those listed above if and only if $B_G$ contains a cycle or a $3$-spider.\\
If $G$ contains a cycle $v_1v_2...v_kv_1$, then $B_G$ contains the cycle $v_1v_2'...v_k'v_1$ if $k$ is even  
{and $v_1 v_2'...v_k v'_1 v_2 ... v'_k v_1$ }
if $k$ is odd.\\
If $G$ contains two loops on $v_1$ and $v_k$ and a path $v_1...v_k$, then $B_G$ contains the cycle $v_1v_2'...v_k'v_k...v_2v_1'v_1$ if $k$ is even and $v_1v_2'...v_kv_k'...v_2v_1'v_1$ if $k$ is odd.\\
If $G$ contains paths $c_1b_1a_1xa_2b_2c_2$ and $xa_3b_3c_3$ {(i.e., $H_1$),} then $B_G$ will contain the $3$-spider formed by $c'_1b_1a'_1xa'_2b_2c'_2$ and $xa'_3b_3c'_3$. If $G$ contains paths $c_1b_1a_1xa_2b_2c_2$ and $xa_3b_3$ with a loop on $b_3$ {(i.e., $H_2$),} then $B_G$ contains the $3$-spider formed by $c'_1b_1a'_1xa'_2b_2c'_2$ and 
{$xa'_3b_3b'_3$}. If $G$ contains paths $c_1b_1a_1xa_2b_2c_2$ and $xa_3$ with a loop on $a_3$ {(i.e., $H_3$),} then $B_G$ contains the $3$-spider formed by $c'_1b_1a'_1xa'_2b_2c'_2$ and $xa'_3a_3x'$. If $G$ contains paths $c_1b_1a_1xa_2b_2c_2$ and a loop on $x$ {(i.e., $H_4$),} then $B_G$ contains the $3$-spider formed by $c'_1b_1a'_1xa'_2b_2c'_2$ and $xx'a_1b'_2$.\\
Now suppose $B_G$ has a cycle or a $3$-spider $S$. If $S$ contains two edges $aa',bb'$, then $S$ contains a path joining at least two of the endpoints of the above edges. Such a path projects into a walk joining $a$ and $b$ in $G$, implying that $G$ contains two connected loops. We can thus suppose that $S$ contains at most one edge of the form $aa'$. If $S$ does not contain two copies $v,v'$, then its projection on $G$ is a cycle (resp. $3$-spider), since $S$ is a cycle (resp. $3$-spider). If $S$ contains two copies of the same vertex, let $v,v'$ be a pair of such copies that are closest in $S$: if $vv'$ is not an edge of $S$, then a shortest path of $S$ joining $v$ and $v'$ has length at least 3 and does not contain two copies of the same vertex, thus it projects into a cycle of $G$. We can thus assume in what follows that $S$ contains an edge $vv'$ and no other edge of this form. \\
If $S$ is a cycle, let $S'$ be the subpath of $S$ joining $v$ and $v'$ having length at least 3: if $S'$ does not contain copies of the same vertex, then it projects into a cycle of $G$; if $S'$ contains copies of the same vertex, let $w,w'$ be a pair of such copies that are closest in 
{$S'$}: there must exist a path in 
{$S'$}
joining $w$ and $w'$ of length at least 3 not containing copies of the same vertex, this path projects into a cycle of $G$.\\
If $S$ is a $3$-spider, let $P$ be the (possibly empty) path in $S$ joining {the center of the spider $x$ and the endpoint of the edge $vv'$ closest to $x$ (note that the edge $vv'$ does not belong to $P$)}. {Let $L_1,L_2$ be the two legs of the spider that do not contain $vv'$}. {If in $P\cup L_1\cup L_2$} there are no copies of the same vertex, then the projection of {$P\cup L_1\cup L_2\cup vv'$ into $G$ is} a path $c_1b_1a_1xa_2b_2c_2$ and a path $xa_3v$ with a loop on $v$ or a path $xv$ with a loop on $v$ or a loop on $x$ (when the length of $P$ is respectively $2,1$ or 0) corresponding respectively to $H_2$, $H_3$ and $H_4$. 
{Otherwise, if there are copies of the same vertex in $P\cup L_1\cup L_2$, let $w,w'$ be the closest such pair}: there must exist a path in {$P\cup L_1\cup L_2$}, of length at least $3$, joining $w$ and $w'$ not containing copies of the same vertex, this path projects into a cycle of $G$. 
    \end{proof}

\section{Concluding remarks}

We know that deciding whether one hunter can win is polynomial, but the complexity of deciding whether the hunting number is less than some given constant $k$ is unknown for $k\ge 2$, even in the case of trees.

Deciding whether $h(G)\le k$ can be seen as a special case of the integer matrix mortality problem where, given $m$ binary square matrices of size $n$, one wants to determine whether any product obtained using these matrices results in the zero matrix.  In fact, a hunter strategy can be seen as a product of matrices by building, for each subset  $W_i$, an integer matrix obtained from the adjacency matrix of $G$ by  setting to $0$ all the entries in the columns related to the vertices in $W_i$. A strategy is then a winning one if the product of these matrices is the zero matrix. \\
Notice that the integer matrix mortality problem is PSPACE-complete (see \cite{benameur2024complexityresultscopsrobber} for references). {This implies that our problem is in PSPACE and might well be PSPACE-complete, since we have no evidence that it belongs to NP.}  {Moreover a hunter winning strategy cannot be a polynomial certificate, as in \cite{althoetmar2025} the authors propose a construction where the shortest hunter winning strategy has superpolynomial length.}  \\
Observe that our characterization of graphs for which $h(G)=1$ implies that the integer matrix mortality problem is polynomial when the input matrices are obtained by setting to $0$ all the entries in a column of the same symmetric matrix (in other words they are of the form $M(I-e_ie_i^T)$ for a symmetric matrix $M$ of size $n$ and $1\le i \le n$). {When $M$ is not necessarily symmetric, the same problem becomes NP-hard \cite{benameur2024complexityresultscopsrobber}.}

Finally, characterizing other classes of graphs for which $h(G,l)$ or $h(G)$ can be computed in polynomial time is another promising research direction.

\section*{Acknowledgments}

This research benefited from the support of the FMJH Program Gaspard Monge for optimization and operations research and their interactions with data science, the IDEX-ISITE initiative CAP 20-25 (ANR-16-IDEX-0001), the International Research Center ``Innovation Transportation and Production Systems'' of the I-SITE CAP 20-25, the ANR project GRALMECO (ANR-21-CE48-0004), and ERC grant titled PARAPATH.

\section*{Bibliography} 
\renewcommand{\bibsection}{}
\bibliographystyle{abbrvnat}
\bibliography{sample} 

\newpage
\appendix



\end{document}